\begin{document}
\newcommand {\emptycomment}[1]{} 

\baselineskip=15pt
\newcommand{\nc}{\newcommand}
\newcommand{\delete}[1]{}
\nc{\mfootnote}[1]{\footnote{#1}} 
\nc{\todo}[1]{\tred{To do:} #1}

\nc{\mlabel}[1]{\label{#1}}  
\nc{\mcite}[1]{\cite{#1}}  
\nc{\mref}[1]{\ref{#1}}  
\nc{\meqref}[1]{\eqref{#1}} 
\nc{\mbibitem}[1]{\bibitem{#1}} 

\delete{
\nc{\mlabel}[1]{\label{#1}  
{\hfill \hspace{1cm}{\bf{{\ }\hfill(#1)}}}}
\nc{\mcite}[1]{\cite{#1}{{\bf{{\ }(#1)}}}}  
\nc{\mref}[1]{\ref{#1}{{\bf{{\ }(#1)}}}}  
\nc{\meqref}[1]{\eqref{#1}{{\bf{{\ }(#1)}}}} 
\nc{\mbibitem}[1]{\bibitem[\bf #1]{#1}} 
}

\newcommand {\comment}[1]{{\marginpar{*}\scriptsize\textbf{Comments:} #1}}
\nc{\mrm}[1]{{\rm #1}}
\nc{\id}{\mrm{id}}  \nc{\Id}{\mrm{Id}}

\def\a{\alpha}
\def\b{\beta}
\def\bd{\boxdot}
\def\bbf{\bar{f}}
\def\bF{\bar{F}}
\def\bbF{\bar{\bar{F}}}
\def\bbbf{\bar{\bar{f}}}
\def\bg{\bar{g}}
\def\bG{\bar{G}}
\def\bbG{\bar{\bar{G}}}
\def\bbg{\bar{\bar{g}}}
\def\bT{\bar{T}}
\def\bt{\bar{t}}
\def\bbT{\bar{\bar{T}}}
\def\bbt{\bar{\bar{t}}}
\def\bR{\bar{R}}
\def\br{\bar{r}}
\def\bbR{\bar{\bar{R}}}
\def\bbr{\bar{\bar{r}}}
\def\bu{\bar{u}}
\def\bU{\bar{U}}
\def\bbU{\bar{\bar{U}}}
\def\bbu{\bar{\bar{u}}}
\def\bw{\bar{w}}
\def\bW{\bar{W}}
\def\bbW{\bar{\bar{W}}}
\def\bbw{\bar{\bar{w}}}
\def\btl{\blacktriangleright}
\def\btr{\blacktriangleleft}
\def\c{\cdot}
\def\ci{\circ}
\def\d{\delta}
\def\dd{\diamondsuit}
\def\D{\Delta}
\def\G{\Gamma}
\def\g{\gamma}
\def\l{\lambda}
\def\lr{\longrightarrow}
\def\o{\otimes}
\def\om{\omega}
\def\p{\psi}
\def\r{\rho}
\def\ra{\rightarrow}
\def\rh{\rightharpoonup}
\def\lh{\leftharpoonup}
\def\s{\sigma}
\def\st{\star}
\def\ti{\times}
\def\tl{\triangleright}
\def\tr{\triangleleft}
\def\v{\varepsilon}
\def\vp{\varphi}

\newtheorem{thm}{Theorem}[section]
\newtheorem{lem}[thm]{Lemma}
\newtheorem{cor}[thm]{Corollary}
\newtheorem{pro}[thm]{Proposition}
\theoremstyle{definition}
\newtheorem{defi}[thm]{Definition}
\newtheorem{ex}[thm]{Example}
\newtheorem{rmk}[thm]{Remark}
\newtheorem{pdef}[thm]{Proposition-Definition}
\newtheorem{condition}[thm]{Condition}
\newtheorem{question}[thm]{Question}
\renewcommand{\labelenumi}{{\rm(\alph{enumi})}}
\renewcommand{\theenumi}{\alph{enumi}}

\nc{\ts}[1]{\textcolor{purple}{Tianshui:#1}}
\nc{\jie}[1]{\textcolor{blue}{LIJIE:#1}}
\font\cyr=wncyr10

 \title{\bf Double crossed biproducts and related structures}

 \author{Tianshui Ma\textsuperscript{*}}
 \address{School of Mathematics and Information Science, Henan Normal University, Xinxiang 453007, China}
         \email{matianshui@htu.edu.cn}

 \author{Jie Li}
 \address{School of Mathematics and Information Science, Henan Normal University, Xinxiang 453007, China}
         \email{{lijie\_0224@163.com}}

 \author{Haiyan Yang}
 \address{School of Mathematics and Information Science, Henan Normal University, Xinxiang 453007, China}
         \email{yhy3023551288@163.com}

 \author{Shuanhong Wang}
 \address{School of Mathematics, Southeast University, Nanjing 210096, China}
         \email{shuanhwang@seu.edu.cn}

  \thanks{\textsuperscript{*}Corresponding author}

\date{\today}

\begin{abstract}
 Let $H$ be a bialgebra. Let $\s: H\o H\ra A$ be a linear map, where $A$ is a left $H$-comodule coalgebra, and an algebra with a left $H$-weak action $\tl$. Let $\tau: H\o H\ra B$ be a linear map, where $B$ is a right $H$-comodule coalgebra, and an algebra with a right $H$-weak action $\tr$.  In this paper, we improve the necessary conditions for the two-sided crossed product algebra $A\#^{\s} H~{^{\tau}\#} B$ and the two-sided smash coproduct coalgebra $A\times H\times B$ to form a bialgebra (called  double crossed biproduct) such that the condition $b_{[1]}\tl a_0\o b_{[0]}\tr a_{-1}=a\o b$ in Majid's double biproduct (or double-bosonization) is one of the necessary conditions. On the other hand, we provide a more general two-sided crossed product algebra structure via Brzez\'{n}ski's crossed product and give some applications.
\end{abstract}

\keywords{Majid's double biproduct;Brzezi\'{n}ski's crossed product; Radford's biproduct}

\subjclass[2010]{16T05,16T10,16T25}

 \maketitle

\tableofcontents

\allowdisplaybreaks

\section{Introduction and preliminaries}
 Radford's biproduct (\cite{Ra}) is one of the celebrated objects in the theory of Hopf algebras, which also can provide examples for Rota-Baxter bialgebras introduced by Ma and Liu in (\cite{ML}). This plays a fundamental role in the classification of finite-dimensional pointed Hopf algebras (\cite{AS}). Majid realized a categorical interpretation of Radford's biproduct (\cite{Maj1,Maj2}): $A\star H$ is a Radford's biproduct if and only if $A$ is a bialgebra in the Yetter-Drinfel'd category ${}^H_H{\mathbb{YD}}$.

 Let $H$ be a bialgebra, $A$ a bialgebra in ${}^H_H{\mathbb{YD}}$, and $B$ a bialgebra in ${\mathbb{YD}}{}^H_H$.  In \cite{Ma99,Ma97}, Majid gave a sufficient condition
 \begin{eqnarray}
 b_{[1]}\tl a_0\o b_{[0]}\tr a_{-1}=a\o b \label{eq:DB}
 \end{eqnarray}
 for a two-sided smash product algebra $A\# H\# B$ and a two-sided smash coproduct coalgebra $A\times H\times B$ to form a bialgebra, named the Majid's double biproduct (or double-bosonization) and denoted by $A\star H\star B$. Majid's double biproduct provided a powerful tool to construct quantum groups (see \cite{Ma99,Ma97}), some related studies can be found in the literature \cite{HH1,HH2,ML,MLD,MLL,MJS,PVO,SW,Z1,Z4}. As an application of our results in this paper, we find that the condition $(\ref{eq:DB})$ is also necessary in $A\star H\star B$. This is a motivation of writing this paper.

 In \cite{WJZ}, as a generalization of Radford's results in \cite{Ra}, Wang, Jiao and Zhao presented the necessary and sufficient conditions for the (left) crossed product algebra (introduced in \cite{BCM} and extended the smash product algebra) and the (left) smash coproduct coalgebra to be a bialgebra (denoted by $A\star^{\s} H$) and also gave a mapping description. Similarly, in order to extend Majid's double biproduct, Ma and Liu in \cite{MLL} gave the necessary and sufficient conditions for the two-sided crossed product algebra $A\#^{\s} H~{^{\tau}\#} B$ and the two-sided smash coproduct coalgebra $A\times H\times B$ to form a bialgebra (denoted by $A\star^{\s} H~{^{\tau}\star} B$). As can be seen from the notations of these structures, if we substitute $B$ in $A\star H\star B$ (or $A\star^{\s} H~{^{\tau}\star} B$) for the ground field $K$, then $A\star H$ (or $A\star^{\s} H$) is obtained. Comparing the necessary conditions in $A\star H\star B$ and $A\star^{\s} H~{^{\tau}\star} B$, we find that the latter is a little more complicated, especially the condition $(C13)$ in \cite[Theorem 2.1]{MLL} and that the former just contains the necessary and sufficient conditions for the smash product algebra $A\# H$ and the smash coproduct coalgebra $A\times H$ to form a bialgebra $A\star H$. It is worth mentioning that these conditions in the former case are just right for the use of lifting method in the classification of finite-dimensional pointed Hopf algebras. For these reasons, we will improve the necessary conditions in \cite[Theorem 2.1]{MLL} such that the necessary and sufficient conditions for the crossed product algebra $A\#^{\s} H$ and the smash coproduct coalgebra $A\times H$ to form a bialgebra $A\star^{\s} H$ can be included and also $(C13)$ above can be decomposed into several simpler ones (of course containing $(\ref{eq:DB})$). This is another motivation of writing this paper.

 In \cite{Br}, Brzezi\'{n}ski gave the necessary and sufficient conditions for a very extensive crossed product to be an algebra (called Brzezi\'{n}ski's crossed product) which includes the crossed product (\cite{BCM}) and the twisted tensor product (\cite{CIMZ}). We will provide a more general two-sided crossed product algebra structure via Brzez\'{n}ski's crossed product and list some special cases.

 Throughout this paper, we follow the definitions and terminologies in \cite{Ra12}, with all algebraic systems supposed to be over the field $K$. Now, let $C$ be a coalgebra. We use the simple Sweedler's notation for the comultiplication: $\Delta(c)=c_{1}\o c_{2}$ for any $c\in C$. Denote the category of left $H$-comodules by $^H \hbox{Mod}$. For $(M, \rho)\in$ $^H \hbox{Mod}$, write: $\rho(x)=x_{-1}\o x_0 \in H\o M$, for all $x \in M$. Denote the category of right $H$-comodules by $ \hbox{Mod}^H$. For $(M, \p)\in$ $ \hbox{Mod}^H$, write: $\p(x)=x_{[0]}\o x_{[1]} \in M\o H$, for all $x \in M$. We denote the left-left Yetter-Drinfel'd category by ${}^H_H{\mathbb{YD}}$, and the right-right Yetter-Drinfel'd category by ${\mathbb{YD}}{}^H_H$. Given a $K$-space $M$, we write $id_M$ for the identity map on $M$.

 First of all, we rewrite the Brzezi\'{n}ski's crossed product as follows.

 \begin{pro}\label{pro:lbr} ({\it Left Brzezi\'{n}ski's crossed product}) (\cite{Br}): Let $A$ be an algebra, $H$ a vector space and $1_H\in H$. Let $G: H\o H\lr A\o H$ (write $G(x\o x')=x^G\o x'_G$ for all $x, x'\in H$) and $R: H\o A\lr A\o H$ (write $R(x\o a)=a_R\o x_R$ for all $x\in H$ and $a\in A$) be linear maps such that
 $$
 (LB1)~~a_R\o1_{H R}=a\o {1_H},~~~{1_A}_R\o x_R={1_A}\o x
 $$
 holds. Then $A\#^G_R H$ ($=A\o H$ as vector space) is an algebra with unit $1_A\o 1_H$ and a product
 \begin{eqnarray}\label{eq:lbr}
 (a\o x)(a'\o x')=a a'_R{x_R}^G\o x'_G
 \end{eqnarray}
 if and only if the following conditions hold:
 \begin{eqnarray*}
 &(LB2)& (a a')_R\o x_R=a_R a'_r\o x_{R r};\\
 &(LB3)& x^G\o {1_H}_G={1_H}^G\o x_G=1_A\o x;\\
 &(LB4)& {x'^G}_R{x_R}^g\o x''_{G g}=x^G{x'_G}^g\o x''_g;\\
 &(LB5)& a_{R r}{x_r}^G\o x'_{R G}=x^G a_R\o x'_{G R}
 \end{eqnarray*}
 for all $a, a'\in A$, $x, x', x''\in H$, where $G=g$, $R=r$.
 \end{pro}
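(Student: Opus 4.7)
The plan is to verify both directions of the equivalence by direct computation using the product formula \eqref{eq:lbr}. For the sufficiency direction, I will check the unit axioms and associativity separately; for the necessity direction, I will evaluate associativity on carefully chosen tensor triples whose ``non-essential'' slots are $1_A$ or $1_H$, so that (LB1) and (LB3) collapse unwanted factors and leave exactly one of the desired identities.

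\textbf{Sufficiency.} Assume (LB2)--(LB5). For the unit, observe that $(a \otimes x)(1_A \otimes 1_H) = a \cdot 1_{A R} \cdot {x_R}^G \otimes 1_{H G}$; the second part of (LB1) collapses $1_{A R} \otimes x_R$ to $1_A \otimes x$, and then the first identity of (LB3) gives $x^G \otimes 1_{H G} = 1_A \otimes x$, so the product equals $a \otimes x$. The mirror computation for $(1_A \otimes 1_H)(a \otimes x)$ uses the first part of (LB1) and the second identity of (LB3). For associativity, expand $((a \otimes x)(a' \otimes x'))(a'' \otimes x'')$ in two rounds of \eqref{eq:lbr} to obtain $a \, a'_R \, {x_R}^G \, a''_r \, {x'_{G r}}^g \otimes x''_g$. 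Expand $(a \otimes x)((a' \otimes x')(a'' \otimes x''))$ similarly; the inner product is $a' a''_R {x'_R}^G \otimes x''_G$, and applying (LB2) twice to distribute the outer $R$ across the triple product in $A$, then applying (LB5) to push this $R$ through the inner $G$, and finally applying (LB4) to combine the two $G$-applications, reduces the second expression to the first.

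\textbf{Necessity.} Assume $A\#^G_R H$ is an associative unital algebra. The unit identity $(1_A \otimes 1_H)(a \otimes x) = a \otimes x$ simplifies via (LB1) to $1_H{}^G \otimes x_G = 1_A \otimes x$, while $(a \otimes x)(1_A \otimes 1_H) = a \otimes x$ gives $x^G \otimes 1_{H G} = 1_A \otimes x$; together they yield (LB3). For (LB4), apply associativity to the triple $(1_A \otimes x)(1_A \otimes x')(1_A \otimes x'')$: every $R$-action that appears collapses under (LB1), and the two bracketings produce precisely the two sides of (LB4). For (LB5), apply associativity to $(1_A \otimes x)(1_A \otimes x')(a \otimes 1_H)$: on one bracketing (LB1) and (LB3) leave an expression with $G$ acting first on $(x,x')$ and $R$ acting afterwards on the resulting $H$-factor and $a$, while the other bracketing swaps the order, yielding (LB5). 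For (LB2), apply associativity to $(1_A \otimes x)(a \otimes 1_H)(a' \otimes 1_H)$: the left bracketing produces $a_R a'_r \otimes x_{R r}$ and the right bracketing produces $(a a')_R \otimes x_R$.

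The principal obstacle lies in the sufficiency associativity computation: one must track three independent copies of $R$ and two of $G$, apply (LB2) twice to expand $R$ across a triple $A$-product, and then invoke (LB5) and (LB4) in the correct order to reduce the two expansions to a common normal form. The necessity direction is substantially easier once the right test tensors are identified, because the neutral slots systematically trigger (LB1) and (LB3) and strip the expressions down to the statement of a single axiom.
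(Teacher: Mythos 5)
Your proposal is correct and follows essentially the same route as the paper: necessity is obtained by specializing the master associativity identity at unit slots (your test triples for (LB2), (LB4), (LB5) are equivalent to the paper's choices $x'=x''=1_H$, $a'=a''=1_A$, and $a'=1_A,\,x''=1_H$), and sufficiency is the direct computation the paper delegates to Brzezi\'{n}ski. One minor imprecision: in the sufficiency computation, after the two applications of (LB2) the next applicable rewrite is (LB4) (which is the condition that pushes the outer $R$ through the $A$-factor produced by the inner $G$), and (LB5) comes last; you have the order and the roles of (LB4) and (LB5) transposed, but this does not affect the substance of the argument.
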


 \begin{proof} The sufficiency can be gotten by \cite{Br}. So we only prove the necessity.
 By $1_A\o 1_H$ is unit and Eqs.$(LB1)$ and (\ref{eq:lbr}), we can get Eq.$(LB3)$. By the associativity and Eq.(\ref{eq:lbr}), we have
 \begin{eqnarray}\label{eq:lbr-1}
 a{a'_R}{x_R}^G{a''_r}{x'_{Gr}}^g\o x''_g=a(a'a''_R{x'_R}^G)_r{x_r}^g\o x''_{Gg}.
 \end{eqnarray}
 Set $x'=x''=1_H$ in Eq.(\ref{eq:lbr-1}), one can obtain Eq.$(LB2)$. Let $a'=a''=1_A$ in Eq.(\ref{eq:lbr-1}), then Eq.$(LB4)$ holds. Likewise, setting $a'=1_A, x''=1_H$ in Eq.(\ref{eq:lbr-1}), we can get Eq.$(LB5)$. These finish the proof. \end{proof}

 \begin{rmk}\label{rmk:lbr-1}
 When Eqs.$(LB1)$ and $(LB3)$ are interchanged, Proposition \ref{pro:lbr} still holds.
 \end{rmk}

 In what follows, we list some important special cases of left Brzezi\'{n}ski's crossed product.

 \begin{ex}\label{ex:1.2} ({\it Left twisted crossed product}) Let $A$ be an algebra, $H$ a bialgebra, $\s: H\o H\lr A$ and $R: H\o A\lr A\o H$ two linear maps such that Eq.(LB1) holds. Then $A\# ^{\s}_R H$ (=$A\o H$ as a vector space)  is an associative algebra with unit $1_A\o {1_H}$ and multiplication
 $$
 (a\o x)(a'\o x')=a a'_{R} \s(x_{R1}, x'_1)\o x_{R2}x'_2
 $$
 for all $a, a'\in A$ and $x, x'\in H$ if and only if Eq.(LB2) and the following conditions hold:
 \begin{eqnarray*}
 &(LTC3)& \s(x, 1_H)=\s(1_H, x)=\v(x)1_A;\\
 &(LTC4)& \s(x'_1, x''_1)_R\s(x_{R1}, x'_2 x''_2)\o x_{R2}x'_3 x''_3=\s(x_1, x'_1)\s(x_{2}x'_2,  x''_1)\o x_{3}x'_3 x''_2;\\
 &(LTC5)& a_{R r}\s(x_{r1}, x'_{R1})\o x_{r2}x'_{R2}=\s(x_{1}, x'_{1})a_{R}\o (x_{2}x'_{2})_R,
 \end{eqnarray*}
 where $a, a'\in A$, $x, x', x''\in H$ and $r=R$.
 \end{ex}

 \begin{proof} Let $G(x\o x')=\s(x_1, x'_1)\o x_2x'_2$ in $A\#^G_R H$.      \end{proof}

 \begin{ex}\label{ex:1.3} ({\it Left crossed product}) (\cite{BCM}): Let $H$ be a Hopf algebra with a left weak action $\tl$ on the algebra $A$. Let $\s: H\o H\lr A$ be a $K$-linear map. Let $A\#^{\s}_{\tl} H$ be the (usually nonassociative) algebra whose underlying space is $A\o H$ and whose multiplication is given by
 $$
 (a\o x)(a'\o x')=a(x_1\tl a')\s(x_2, x'_1)\o x_3x'_2
 $$
 for all $a, a'\in A$ and $x, x'\in H$. This $A\#^{\s}_{\tl} H$ is an associative algebra with unit $1_A\o {1_H}$ if and only if $\s$ satisfies Eq.(LTC3) and the following conditions:
 \begin{eqnarray*}
 &(LC2)& (x_1\tl \s(x'_1, x''_1))\s(x_2, x'_2x''_2)=\s(x_1,x'_1)\s(x_2x'_2, x'');\\
 &(LC3)& (x_1\tl (x'_1\tl a))\s(x_2, x'_2)=\s(x_1,x'_1)(x_2x'_2\tl a)
 \end{eqnarray*}
 for all $a\in A$ and $x, x', x''\in H$.
 \end{ex}

 \begin{proof} Let $R(x\o a)=x_1 \tl a\o x_2$ in $A\#^{\s}_R H$.      \end{proof}

 \begin{ex}\label{ex:1.4} ({\it Left twisted product}) (\cite{BCM}): Let $H$ be a bialgebra and $A$ an algebra. Let $\s: H\o H\lr A$ be a $K$-linear map. Let $A\#^{\s} H$ (=$A\o H$ as a vector space)  is an associative algebra with unit $1_A\o {1_H}$ and multiplication
 $$
 (a\o x)(a'\o x')=a a'\s(x_1, x'_1)\o x_2 x'_2
 $$
 for all $a, a'\in A$ and $x, x'\in H$ if and only if $\s$ satisfies Eq.(LTC3) and the following conditions:
 \begin{eqnarray*}
 &(L2)& \s(x''_1, x'_1)\s(x, x''_2x'_2)=\s(x_1,x''_1)\s(x_2x''_2, x');\\
 &(L3)&  a \s(x, x')=\s(x, x') a
 \end{eqnarray*}
 for all $a\in A$ and $x, x', x''\in H$.
 \end{ex}

 \begin{proof} Let $x\tl a=\v(x) a$ in $A\#^{\s}_{\tl} H$.      \end{proof}

 \begin{ex}\label{ex:1.5} ({\it Left unified product}) (\cite{AM}):  Let $A$, $H$ be two bialgebras and $\tr: H\o A\lr H$, $\tl: H\o A\lr A$, $\s: H\o H\lr A$ linear maps such that
 $$
 (LU1)~~~~{1_H}\tl a_1\o {1_H}\tr a_2=a\o {1_H};~~x_1\tl 1_A\o x_2\tr 1_A=1_A\o x
 $$
 holds. Then $A\#^{\s}_{\tl \tr} H$ (=$A\o H$ as a vector space)  is an associative algebra with unit $1_A\o {1_H}$ and multiplication
 $$
 (a\o x)(a'\o x')=a(x_1\tl a'_1)\s((x_2\tr a'_2)_1, x'_1) \o (x_2\tr a'_2)_2x'_2
 $$
 for all $a, a'\in A$ and $x, x'\in H$ if and only if the following conditions hold:
 \begin{eqnarray*}
 &(LU2)& x_1\tl (aa')_1\o x_2\tr (aa')_2=(x_1\tl a_1)((x_2\tr a_2)_1\tl a'_1)\o ((x_2\tr a_2)_2\tl a'_2);\\
 &(LU3)& \s(x_1, {1_H})\o x_2=\s({1_H}, x_1)\o x_2=1_A\o x;\\
 &(LU4)& ~~(x_1\tl \s(x'_1, x''_1)_1)\s((x_2\tr \s(x'_1, x''_1)_2)_1, (x'_2x''_2)_1)\o (x_2\tr \s(x'_1, x''_1)_2)_2 (x'_2x''_2)_2\\
 & &=\s(x_1, x'_1)\s((x_2x'_2)_1, x''_1)\o (x_2x'_2)_2 x''_2;\\
 &(LU5)&(x_1\tl (x'_1\tl a_1)_1)\s((x_2\tr (x'_1\tl a_1)_2)_1, (x'_2\tr a_2)_1)\o (x_2\tr (x'_1\tl a_1)_2)_2 \\
 &&\times (x'_2\tr a_2)_2=\s(x_1, x'_1)((x_2x'_2)_1\tl a_1)\o ((x_2x'_2)_2\tr a_2),
 \end{eqnarray*}
 where $a, a'\in A$, $x, x', x''\in H$.
 \end{ex}

 \begin{proof}  Let $R(x\o a)=x_1\tl a_1 \o x_2\tr a_2$ in $A\#^{\s}_R H$.        \end{proof}

 \begin{ex}\label{ex:1.6} ({\it Left twisted tensor product}) (\cite{CIMZ}):  Let $H$, $A$ be two algebras and $R: H\o A\lr A\o H$ a linear map. Then $A\#_R H$ (=$A\o H$ as a vector space)  is an associative algebra with unit $1_A\o {1_H}$ and multiplication
 $$
 (a\o x)(a'\o x')=a{a'_R} \o x_R x'
 $$
 for all $a, a'\in A$ and $x, x'\in H$ if and only if  Eqs.(LB1), (LB2) and the following condition hold:
 \begin{eqnarray*}
 &(LT3)& {a_{R}}\o (x x')_{R}={a}_{R r}\o x_{r}{x'_{R}},
 \end{eqnarray*}
 where $a, a'\in A$, $x, x'\in H$ and $r=R$.
 \end{ex}

 \begin{proof}  Let $G$ be trivial, i.e., $G(x\o x')=1_A\o x x'$ in $A\#^G_R H$.      \end{proof}

 \begin{rmk}\label{rmk:1.7}(1) The well known Drinfeld double is also a special case of left Brzezi\'{n}ski's crossed product.\newline
  \indent{\phantom{\bf Remarks}} (2) From above we know that the left Brzezi\'{n}ski's crossed product is a very general product structure.
 \end{rmk}

 \begin{defi}\label{de:1.9}({\it Majid's double biproduct}) We recall, from \cite{Ma99,Ma97}, the construction of the so-called double biproduct. Let $H$ be a bialgebra, $A$ a bialgebra in ${}^H_H{\mathbb{YD}}$, and $B$ a bialgebra in ${\mathbb{YD}}{}^H_H$. Adopt the following notation for the structure maps: the counits are
 $\v_A$ and $\v_B$, the comultiplications are $\D_A(a)=a_1\o a_2$ and $\D_B(b)=b_1\o b_2$, and the actions and coactions are
 \begin{eqnarray*}
 &&H\o A\lr A,  x\o~~~a\mapsto  x\tl a,~~~A\lr H\o A,~~~a \mapsto  a_{-1}\o a_0,\\
 &&B\o H\lr B,~~~b\o x\mapsto b\tr x,~~~~B\lr  B\o H,~~~b\mapsto  b_{[0]}\o  b_{[1]}
 \end{eqnarray*}
 for all $x\in  H$, $a\in  A$, $b\in B$. The vector space $A\o H\o B$ becomes an algebra (called the two-sided smash product, $A\# H\# B$) with unit $1_A\o 1_H\o 1_B$ and multiplication
 \begin{eqnarray}
 (a\o x\o b)(a' \o x' \o b') = a(x_1\tl a')\o x_2x'_1\o (b\tr  x'_2)b', \label{eq:tssp}
 \end{eqnarray}
 and a coalgebra (called the two-sided smash coproduct, $A\times H\times B$) with
 counit $\v(a\o x\o b)=\v_A(a)\v_H(x)\v_B(b)$ and comultiplication
 \begin{eqnarray}
 &&\D(a\o x\o b)=a_1\o a_{2-1}x_1\o b_{1{[0]}} \o a_{20}\o x_2b_{1{[1]}}\o b_2. \label{eq:tssc}
 \end{eqnarray}
 Moreover, assume that Eq.(\ref{eq:DB}) holds, it follows that $A\o H\o B$ with two-sided smash product algebra and two-sided smash product coalgebra is a bialgebra, called the Majid's double biproduct, denoted by $A\star H\star B$.
 \end{defi}

 \begin{rmk}\label{rmk:1.10} (1) Replacing $A$ (or $B$) with $K$, the double biproduct is exactly the right (or left) variant of Radford's biproduct.

 (2) The double biproduct here is actually the case of Majid's in \cite[Theorem A.1]{Ma99} with a trivial pairing.
 \end{rmk}

\section{Brzezi\'{n}ski's two-sided crossed product}

 In this section, we will first introduce the notion of Brzezi\'{n}ski's two-sided crossed product generalizing the two-sided smash product. First we need to list the right version of some crossed products.

 \subsection{Right Brzezi\'{n}ski's crossed product and its special cases}

 \begin{pro}\label{pro:2.1}({\it Right Brzezi\'{n}ski's crossed product}) (\cite[Theorem 2.1]{Pa14}) Let $H$ be a vector space with a distinguished element $1_H\in H$ and $B$ an algebra. Let $F: H\o H\lr H\o B$ (write $F(x\o x')=x_F\o x'^F$ for all $x, x'\in H$) and $T: B\o H\lr H\o B$ (write $T(b\o x)=x_T\o b_T$ for all $x\in H$ and $b\in B$) be two linear maps such that
 \begin{eqnarray*}
 &(RB1)& 1_{H T}\o b_T=1_H\o b,~x_T\o 1_{B T}=x\o 1_B
 \end{eqnarray*}
 holds. Then $H^F_T\# B$ (=$H\o B$ as a vector space) is an associative algebra with unit $1_H\o {1_B}$ and multiplication
 \begin{eqnarray*}
 (x\o b)(x'\o b')=x_F\o {x'_T}^F b_T b'
 \end{eqnarray*}
 for all $b, b'\in B$ and $x, x'\in H$ if and only if the following conditions hold:
 \begin{eqnarray*}
 &(RB2)& x_{T}\o (b b')_{T}=x_{T t}\o b_{t} b'_{T};\\
 &(RB3)& 1_{H F}\o x^{F}=x_{F}\o {1_H}^{F}=x\o 1_{B};\\
 &(RB4)& x_{F f}\o {x''_{T}}^{f}{x'^{F}}_{T}=x_{f}\o {x'_{F}}^{f}x''^{F};\\
 &(RB5)& x_{T F}\o {x'_{t}}^{F}{b}_{Tt}=x_{F T}\o {b_{T}}x'^{F},
 \end{eqnarray*}
 where $b, b'\in B$, $x, x', x''\in H$ and $f=F, t=T$.
 \end{pro}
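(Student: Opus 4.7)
The plan is to mirror the proof of Proposition \ref{pro:lbr}. Sufficiency is established in \cite{Pa14}, so I only need to address necessity, which splits into two pieces: (RB3) will follow from the unit axiom, while (RB2), (RB4), (RB5) will drop out of a single master identity encoding associativity, by specializing the six free arguments in three different ways.

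To derive (RB3) I would compute both one-sided products with $1_H\o 1_B$. Using (RB1) to collapse the $T$-contributions, one has $(1_H\o 1_B)(x'\o b')=1_{H F}\o {x'_T}^F 1_{B T} b'=1_{H F}\o x'^F b'$; equating this with $x'\o b'$ forces $1_{H F}\o x'^F=x'\o 1_B$. Symmetrically, $(x\o b)(1_H\o 1_B)=x_F\o {1_{H T}}^F b_T=x_F\o {1_H}^F b$, and equating with $x\o b$ forces $x_F\o {1_H}^F=x\o 1_B$. Together these are exactly (RB3).

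Next I would expand both sides of $(x\o b)((x'\o b')(x''\o b''))=((x\o b)(x'\o b'))(x''\o b'')$ using two independent copies $(F,T)$ and $(f,t)$ of the structure maps, obtaining a single master equation in $H\o B$ with free variables $x,x',x''\in H$ and $b,b',b''\in B$. Specializing yields the remaining axioms: setting $x'=x''=1_H$ collapses the $F$-applications via (RB3) and reduces the master equation to the $T$-distribution identity over products in $B$, which is precisely (RB2); setting $b=b'=b''=1_B$ collapses all $T$-applications via (RB1) and leaves the $F$-associativity-type identity (RB4); and the intermediate specialization $x''=1_H$, $b'=1_B$ isolates the mixed $T$-$F$ compatibility (RB5).

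The main difficulty is purely bookkeeping: one must carefully track which copy of $F$ or $T$ acts on which factor as the triple product is expanded, and apply (RB1), (RB3) at the correct positions after each specialization. Conceptually the argument is a verbatim transpose of the proof of Proposition \ref{pro:lbr} under the dictionary $A\leftrightarrow B$, $R\leftrightarrow T$, $G\leftrightarrow F$, with the subscript and superscript conventions swapped accordingly to reflect that $T$ and $F$ now land in $H\o B$ rather than $A\o H$.
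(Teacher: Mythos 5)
Your overall strategy is the right one (the paper itself gives no proof of Proposition \ref{pro:2.1} beyond citing Panaite, and the intended argument is indeed the mirror of the proof of Proposition \ref{pro:lbr}), and your derivation of $(RB3)$ from the unit axiom is correct, as is the specialization $b=b'=b''=1_B$ for $(RB4)$. However, two of your three specializations of the associativity identity are wrong, because you transposed the left-handed proof without reversing the order of the tensor factors. In the right-handed product the map $T$ couples the $B$-component of the \emph{first} factor with the $H$-component of the \emph{second}, so setting $x'=x''=1_H$ kills every occurrence of both $F$ and $T$: using $(RB1)$ and $(RB3)$ one finds that both sides of the associativity identity collapse to $x\o bb'b''$, and no condition is extracted. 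The specialization that yields $(RB2)$ is $x=x'=1_H$ (with $x''$ kept free): then
\begin{eqnarray*}
((1_H\o b)(1_H\o b'))(x''\o b'')=x''_{T}\o (bb')_{T}b'',\qquad
(1_H\o b)((1_H\o b')(x''\o b''))=x''_{Tt}\o b_{t}b'_{T}b'',
\end{eqnarray*}
which is $(RB2)$ after setting $b''=1_B$.

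The same error occurs for $(RB5)$: with your choice $x''=1_H$, $b'=1_B$, both sides reduce (via $(RB1)$ and $(RB3)$) to $x_F\o {x'_T}^F b_T b''$, again a tautology. The correct specialization is the mirror image $x=1_H$, $b'=1_B$, which gives
\begin{eqnarray*}
((1_H\o b)(x'\o 1_B))(x''\o b'')=x'_{TF}\o {x''_{t}}^{F}b_{Tt}b'',\qquad
(1_H\o b)((x'\o 1_B)(x''\o b''))=x'_{FT}\o b_{T}{x''}^{F}b'',
\end{eqnarray*}
i.e.\ $(RB5)$ after relabelling. The general rule you need is that the dictionary $A\leftrightarrow B$, $R\leftrightarrow T$, $G\leftrightarrow F$ must be accompanied by reversing the order of the three factors, so the left-handed specializations $x'=x''=1_H$; $a'=a''=1_A$; $a'=1_A,\,x''=1_H$ become $x=x'=1_H$; $b=b'=1_B$; $b'=1_B,\,x=1_H$ respectively. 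With these corrections the proof goes through exactly as you outline.
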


 \begin{ex}\label{ex:2.2} (1) {\it (Right twisted crossed product)} Let $B$ be an algebra, $H$ a bialgebra, $\tau: H\o H\lr B$ and $T: B\o H\lr H\o B$ two linear maps such that  Eq.(RB1) holds. Then $H^{\tau}_T\#  B$ (=$H\o B$ as a vector space)  is an associative algebra with unit $1_H\o {1_B}$ and multiplication
 $$
 (x\o b)(x'\o b')=x_1 x'_{T1} \o \tau(x_2, x'_{T2}) b_T b'
 $$
 for all $a, a'\in A$ and $x, x'\in H$ if and only if  Eq.(RB2) and the following conditions hold:
 \begin{eqnarray*}
 &(RTC2)& \tau(x, 1_H)=\tau(1_H, x)=\v(x)1_B;\\
 &(RTC3)& x_1 x'_1 x''_{T1}\o \tau(x_2 x'_2, x''_{T2})\tau(x_3, x'_3)_{T}=x_1 x'_1 x''_{1}\o \tau(x_2,  x'_2x''_{2})\tau(x'_3, x''_3);\\
 &(RTC4)& x_{T1}x'_{t1}\o \tau(x_{T2}, x'_{t2}){b}_{Tt}=(x_1x'_1)_{T}\o {b_{T}}\tau(x_2, x'_2),
 \end{eqnarray*}
 where $b, b'\in B$, $x, x', x''\in H$ and $t=T$.

 \begin{proof} Let $F(x\o x')=x_1 x'_1\o \tau (x_2, x'_2)$ in Proposition \ref{pro:2.1}.      \end{proof}

 (2) {\it (Right unified product)} Let $B$, $H$ be two bialgebras and $\btl: B\o H\lr H$, $\btr: B\o H\lr B$, $\tau: H\o H\lr B$ linear maps such that
 $$
 (RU1)~~~~b_1\btl {1_H}\o b_2\btr {1_H}={1_H}\o b;~~1_B\btl x_1\o 1_B\btr x_2=x\o 1_B
 $$
 holds. Then $H^{\tau}_{\btr \btl}\# B$ (=$H\o B$ as a vector space)  is an associative algebra with unit ${1_H}\o 1_B$ and multiplication
 $$
 (x\o b)(x'\o b')=x_1(b_1\btl x'_1)_1 \o \tau(x_2, (b_1\btl x'_1)_2)(b_2\btr x'_2)b'
 $$
 for all $x, x'\in H$ and $b, b'\in B$ if and only if the following conditions hold:
 \begin{eqnarray*}
 &(RU2)& b_1b'_1\btl x_1\o b_2b'_2\btr x_2=b_1\btl (b'_1\btl x_1)_1\o (b_2\btr (b'_1\btl x_1)_2)(b'_2\btr x_2);\\
 &(RU3)& x_1\o \tau(1_H, x_2)=x_1\o \tau(x_2, 1_H)=x\o 1_B;\\
 &(RU4)& ~~x_1x'_1(\tau(x_3, x'_3)_1\btl x''_1)_1\o \tau(x_2x'_2, (\tau(x_3, x'_3)_1\btl x''_1)_2)(\tau(x_3, x'_3)_2\btl x''_2)\\
 & &=x_1x'_1x''_1\o \tau(x_2, x'_2x''_2)\tau(x'_3, x''_3);\\
 &(RU5)&(b_1\btl x_1)_1((b_2\btr x_2)_1\btl x'_1)_1\o \tau((b_1\btl x_1)_2, ((b_2\btr x_2)_1\btl x'_1)_2) \\
 &&\times ((b_2\btr x_2)_2\btr x'_2)=b_1\btl x_1x'_1\o (b_2\btr x_2x'_2)\tau(x_3, x'_3),
 \end{eqnarray*}
 where $b, b'\in B$, $x, x', x''\in H$.

 \begin{proof}  Let $T(b\o x)=b_1\btl x_1 \o b_2\btr x_2$ in $H^{\tau}_T\#  B$.      \end{proof}

 (3) {\it (Right $F$-twist product)} Let $H$ be a vector space with a distinguished element $1_H\in H$ and $B$ an algebra. Let $F: H\o H\lr A\o H$ (write $F(x\o x')=x_F\o x'^F$ for all $x, x'\in H$) be a linear maps. Then $H^F\# B$ (=$H\o B$ as a vector space)  is an associative algebra with unit $1_H\o {1_B}$ and multiplication
 $$
 (x\o b)(x'\o b')=x_F\o {x'}^F b b'
 $$
 for all $b, b'\in B$ and $x, x'\in H$ if and only if  Eq.(RB3) and the following conditions hold:
 \begin{eqnarray*}
 &(RF2)& x_{F f}\o {x''}^{f}{x'^{F}}=x_{f}\o {x'_{F}}^{f}x''^{F};\\
 &(RF3)& x_{F}\o {x'}^{F}{b}=x_{F}\o {b}x'^{F},
 \end{eqnarray*}
 where $b, b'\in B$, $x, x', x''\in H$ and $f=F$.

 \begin{proof} Let $T$ be trivial in Proposition \ref{pro:2.1}.    \end{proof}

 Here we remark that if we set $F(x, x')=x_1 x'_1\o \tau (x_2, x'_2)$ in $H^F\# B$, then we get the {\it right twist product} $H^{\tau}\# B$ and the conditions $(RB3), (RF2)$, $(RF3)$ become the conditions $(RTC2)$ and
 \begin{eqnarray*}
 &(R2)& \tau(x_1x'_1, x'')\tau(x_2, x'_2)=\tau(x, x'_1x''_1)\tau(x'_2, x''_2);\\
 &(R3)& \tau(x, x')b=b\tau(x, x'),
 \end{eqnarray*}
 which is the right version of $(LTC3), (L2)$, $(L3)$.

 (4) {\it(Right twisted tensor product)} Let $H$, $B$ be two algebras and $T: B\o H\lr H\o B$ (write $T(b\o x)=x_T\o b_T$ for all $x\in H$ and $b\in B$) a linear map. Then $H_T\# B$ (=$H\o B$ as a vector space)  is an associative algebra with unit $1_H\o {1_B}$ and multiplication
 $$
 (x\o b)(x'\o b')=x{x'_T} \o b_T b'
 $$
 for all $b, b'\in B$ and $x, x'\in H$ if and only if  Eqs.(RB1), (RB2) and the following condition hold:
 \begin{eqnarray*}
 &(RT3)& x_{T}{x'_{t}}\o {b}_{Tt}=(x x')_{T}\o {b_{T}},
 \end{eqnarray*}
 where $b, b'\in B$, $x, x'\in H$ and $t=T$.

 \begin{proof}  Let $F$ be trivial, i.e., $F(x\o x')=x x'\o 1_B$ in Proposition \ref{pro:2.1}.             \end{proof}

 (5) {\it (Right crossed product)} (\cite{MLL}) Let $H$ be a Hopf algebra with a right weak action $\tr$ on the algebra $B$. Let $\tau: H\o H\lr B$ be a $K$-linear map. Let $H^{\tau}_{\tr}\# B$ be the (usually nonassociative) algebra whose underlying space is $H\o B$ and whose multiplication is given by
 $$
(x\o b)(x'\o b')=x_1x'_1\o \tau(x_2, x'_2)(a\tr x'_3)b'
 $$
 for all $b, b'\in B$ and $x, x'\in H$. This $H^{\tau}_{\tr}\# B$ is an associative algebra with unit $1_H\o {1_B}$ if and only if $\tau$ satisfies Eq.(RTC2) and the following conditions:
 \begin{eqnarray*}
 &(RC2)&\tau(x_1x'_1, x''_1)(\tau(x_2, x'_2)\tr x''_2)=\tau(x, x'_1x''_1)\tau(x'_2, x''_2);\\
 &(RC3)&\tau(x'_1, x''_1)((b\tr x'_2)\tr x''_2)=(b\tr x'_1x''_1)\tau(x'_2, x''_2)
 \end{eqnarray*}
 for all $b\in B$ and $x, x', x''\in H$.

 \begin{proof}  Let $T(b\o x)=x_1\o b\tr x_2$ in $H^{\tau}_T\#  B$.                                    \end{proof}

 \end{ex}

 \subsection{Brzezi\'{n}ski's two-sided crossed product algebra and its special cases}
 Next we give the main result in this section.

 \begin{thm}\label{thm:2.3} Let $H$ be a coalgebra with an element $1_H$ such that $\D(1_H)=1_H\o 1_H$ and $A$, $B$ two algebras. Let $G: H\otimes H \longrightarrow A\otimes H$, $R: H\otimes A \longrightarrow A\otimes H$, $T: B\o H \lr H\o B$ and $\tau: H\otimes H \longrightarrow B$ be linear maps, such that Eqs. $(LB1)$-$(LB3)$, $(RB1)$, $(RB2)$, $(RTC2)$ hold. Then $A\#{_{R}^{G} H^{\tau}_{T}}\# B$ $(=A\o H\o B$ as vector space) is an associative algebra with unit $1_A\o 1_H\o {1_B}$ and multiplication
 $$
 (a\o x\o b)(a'\o x' \o b')=a a'_{R}{x_{R1}}^{G}\otimes x'_{T1G}\otimes \tau(x_{R2},x'_{T2})b_{T}b'
 $$
 for all $a, a'\in A$, $x, x'\in H$ and $b, b'\in B$ if and only if the following conditions hold:
 \begin{eqnarray*}
 &(BT1)&~~{x_{1}}^{G}{x'_{1G1}}^{g}\otimes x''_{T1g}\otimes \tau(x'_{1G2},x''_{T2}){\tau(x_{2},x'_{2})}_{T}\\
 &&~~~~~~~={{x'_{1}}^{G}}_{R}{x_{R1}}^{g}\otimes x''_{1G1g}\otimes \tau(x_{R2},x''_{1G2})\tau(x'_{2},x''_{2});\\
 &(BT2)&~~a_{Rr}{x_{r1}}^{G}\otimes x'_{R1G}\otimes \tau(x_{r2},x'_{R2})\\
 &&~~~~~~~={x_{1}}^{G}a_{R}\otimes x'_{1GR}\otimes \tau(x_{2},x'_{2});\\
 &(BT3)&~~{x_{1}}^{G}\otimes x'_{1GT}\otimes b_{T}\tau(x_{2},x'_{2})\\
 &&~~~~~~~={x_{T1}}^{G}\otimes x'_{t1G}\otimes \tau(x_{T2},x'_{t2})b_{Tt};\\
 &(BT4)&~~a_{R}\otimes x_{RT}\otimes b_{T}=a_{R}\otimes x_{TR}\otimes b_{T}.
 \end{eqnarray*}
 where $a\in A$, $x, x', x''\in H$, $b\in B$ and $R=r, G=g, T=t$. In this case, we call the algebra $A\#{_{R}^{G} H^{\tau}_{T}}\# B$ {\it Brzezi\'{n}ski's two-sided crossed product algebra}.
 \end{thm}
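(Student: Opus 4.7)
The plan is to derive the associativity of the given product from the six previously assumed conditions together with $(BT1)$--$(BT4)$, and conversely to extract each of the four conditions by specializing the associativity identity at carefully chosen tensor factors. The unit property of $1_A\o 1_H\o 1_B$ is immediate from $(LB1)$, $(LB3)$, $(RB1)$ and $(RTC2)$, so I would concentrate on associativity.

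\textbf{Sufficiency.} I would expand both iterated triple products $P_{1} := \bigl((a\o x\o b)(a'\o x'\o b')\bigr)(a''\o x''\o b'')$ and $P_{2} := (a\o x\o b)\bigl((a'\o x'\o b')(a''\o x''\o b'')\bigr)$ directly from the definition. Each side lands in $A\o H\o B$ with the $A$-slot a product of three twisted copies of $a,a',a''$ (involving iterated $R$ and $G$), the $H$-slot an iterated application of $G$ and $T$ to $x,x',x''$, and the $B$-slot a product of two nested $\tau$-values and twisted copies of $b,b',b''$ (involving iterated $T$). I would then use $(LB2)$ to consolidate the double $R$-action on the $A$-slot and $(RB2)$ to consolidate the double $T$-action on the $B$-slot, reducing the associativity check to a single three-fold tensor identity in which all occurrences of $R,r,G,g,T,t$ are unambiguously placed. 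In this canonical form the equality $P_{1}=P_{2}$ breaks into four natural blocks, matched exactly by $(BT2)$ (for $a''$ crossing the middle $H$-block), $(BT3)$ (symmetrically for $b$ on the $B$-side), $(BT4)$ (interchange of $R$ and $T$ on the middle $H$-variable), and $(BT1)$ (the residual $H$-level identity that remains after the $A$- and $B$-inputs reduce to units).

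\textbf{Necessity.} Conversely I would start from associativity on generic inputs and derive $(BT1)$--$(BT4)$ by targeted substitutions, using $(LB1)$, $(LB3)$, $(RB1)$ and $(RTC2)$ to kill all off-target operators. Setting $a=a'=a''=1_A$ and $b=b'=b''=1_B$ collapses every $R$-, $r$-, $T$-, $t$-subscript attached to a unit, so the identity reduces to the purely $H$-$H$-$B$ statement $(BT1)$. Taking $a=a''=1_A$, $b=b'=b''=1_B$ and $x''=1_H$ trivializes the outer $\tau$- and $G$-values via $(RTC2)$ and $(LB3)$ and isolates the interaction of $a'$ with the middle $H$-slot, giving $(BT2)$. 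A symmetric choice $a=a'=a''=1_A$, $b=b''=1_B$, $x=1_H$ produces $(BT3)$. Finally, $a'=a''=1_A$, $b'=b''=1_B$, $x'=x''=1_H$ collapses both inner $G$- and $\tau$-evaluations and leaves an identity whose comparison on the central $H$-slot is precisely $(BT4)$.

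\textbf{Main obstacle.} The principal difficulty is bookkeeping. Iterating $R$ and $T$ via $(LB2)$ and $(RB2)$ produces auxiliary $H$-factors which then feed $G$, $T$, or $\tau$ further along the computation, so the Sweedler-style labels $R,r,G,g,T,t$ must be tracked carefully so that the distributed form of each side matches the exact index pattern written in $(BT1)$--$(BT4)$, including the paired labels $(R,r)$ and $(G,g)$. In particular $(BT4)$ asserts that $R$ and $T$ commute when acting on the central $H$-variable, and its extraction from the full associativity identity requires killing the $G$- and $\tau$-twists without destroying the $R$-$T$ coupling, making it the most delicate of the four specializations. Once the indexing is organized, the comparison itself is essentially mechanical.
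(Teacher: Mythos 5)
Your sufficiency argument follows the paper's route: expand both associations, normalize the iterated $R$'s with $(LB2)$ and the iterated $T$'s with $(RB2)$, then match the two sides using $(BT1)$--$(BT4)$; that part is fine in outline, as is the unit check. The genuine gap is in the necessity direction: three of your four specializations are vacuous because they turn at least one of the three factors into the unit $1_A\o 1_H\o 1_B$, so the associativity identity you are specializing holds trivially and yields nothing. Concretely, for $(BT2)$ you set $a=a''=1_A$, $b=b'=b''=1_B$, $x''=1_H$, which makes the \emph{third} factor equal to $1_A\o 1_H\o 1_B$; for $(BT3)$ you set $a=a'=a''=1_A$, $b=b''=1_B$, $x=1_H$, which makes the \emph{first} factor the unit; and for $(BT4)$ you set $x'=x''=1_H$, $a'=a''=1_A$, $b'=b''=1_B$, which makes both the second and third factors the unit. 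The underlying misstep is a misidentification of which slot the surviving element must occupy: in $(BT2)$ the element $a$ carries the double index $Rr$, so it must cross both $x'$ and $x$ and hence must be the $A$-component of the \emph{third} factor, not of the middle one; dually, in $(BT3)$ the element $b$ carries $T$ and $Tt$, so it must sit in the \emph{first} factor; and $(BT4)$ needs $b$ to the left of $x$ and $a$ to the right of $x$ simultaneously.

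The correct specializations (all unnamed entries being $1_A$, $1_H$ or $1_B$) are: $(1_A\o x\o 1_B)(1_A\o x'\o 1_B)(1_A\o x''\o 1_B)$ for $(BT1)$ --- this one you have right; $(1_A\o x\o 1_B)(1_A\o x'\o 1_B)(a\o 1_H\o 1_B)$ for $(BT2)$; $(1_A\o 1_H\o b)(1_A\o x\o 1_B)(1_A\o x'\o 1_B)$ for $(BT3)$; and $(1_A\o 1_H\o b)(1_A\o x\o 1_B)(a\o 1_H\o 1_B)$ for $(BT4)$. With these placements, $(LB1)$, $(LB3)$, $(RB1)$ and $(RTC2)$ do kill the off-target $R$-, $G$-, $T$- and $\tau$-occurrences exactly as you intend, and the two associations reduce to the two sides of the corresponding condition. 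Since the paper dismisses necessity as ``straightforward,'' getting these placements right is essentially the whole content of that direction, so as written your proof of necessity fails for $(BT2)$, $(BT3)$ and $(BT4)$.
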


 \begin{proof} $(\Longleftarrow)$ We check the associativity as follows: for all $a, a', a''\in A$, $x, x', x''\in H$ and $b, b', b''\in B$,
 \begin{eqnarray*}
 &&((a\o x\o b)(a'\o x' \o b'))(a''\o x'' \o b'')\\
 &=&aa'_{R}{x_{R 1}}^{g}a''_{r}{x'_{T 1 g r 1}}^{G}\otimes x''_{t 1 G}\otimes \tau(x'_{T 1 g r 2},x''_{t 2})(\tau(x_{R 2},x'_{T 2})b_{T}b')_{t}b''\\
 &\stackrel{(RB2)}{=} &aa'_{R}{x_{R 1}}^{g}a''_{r}{x'_{T 1 g r 1}}^{G}\otimes x''_{t \bT \bt 1G}\otimes \tau(x'_{T 1 g r 2},x''_{t \bT \bt 2}){\tau(x_{R 2},x'_{T 2})}_{\bt}b_{T \bT}b'_{t}b''\\
 &\stackrel{(BT2)}{=} &aa'_{R}a''_{r \bR}{x_{R \bR 1}}^{g}{x'_{T r 1 g 1}}^{G}\otimes x''_{t \bT \bt 1 G}\otimes \tau(x'_{T r 1 g 2},x''_{t \bT \bt 2}){\tau(x_{R \bR 2},x'_{T r 2})}_{\bt}b_{T \bT}b'_{t}b''\\
 &\stackrel{(BT4)}{=} &aa'_{R}a''_{r \bR}{x_{R \bR 1}}^{g}{x'_{r T 1 g 1}}^{G}\otimes x''_{t \bT \bt 1 G}\otimes \tau(x'_{r T 1 g 2},x''_{t \bT \bt 2})\tau(x_{R \bR 2},x'_{r T 2})_{\bt}b_{T \bT}b'_{t}b''
 \end{eqnarray*}
 and
 \begin{eqnarray*}
 &&(a\o x\o b)((a'\o x' \o b')(a''\o x'' \o b''))\\
 &=&a(a'a''_{r}{x'_{r 1}}^{g})_{R}{x_{R 1}}^{G}\otimes x''_{t 1 g T 1 G}\otimes \tau(x_{R 2},x''_{t 1 g T 2})b_{T}\tau(x'_{r 2},x''_{t 2})b'_{t}b''\\
 &\stackrel{(LB2)}{=} &aa'_{R}(a''_{r}{x'_{r 1}}^{g})_{\bR}{x_{R \bR 1}}^{G}\otimes x''_{t 1 g T 1 G}\otimes \tau(x_{R \bR 2},x''_{t 1 g T 2})b_{T}\tau(x'_{r 2},x''_{t 2})b'_{t}b''\\
 &\stackrel{(BT3)}{=} &aa'_{R}(a''_{r}{x'_{r \bT 1}}^{g})_{\bR}{x_{R \bR1}}^{G}\otimes x''_{t T 1 g 1 G}\otimes \tau(x_{R \bR 2},x''_{t T 1 g 2})\tau(x'_{r \bT 2},x''_{t T 2})b_{\bT T}b'_{t}b''\\
 &\stackrel{(LB2)}{=} &aa'_{R}a''_{r \bR}{{x'_{r \bT 1}}^{g}}_{\br}{x_{R \bR \br 1}}^{G}\otimes x''_{t T 1 g 1 G}\otimes \tau(x_{R \bR \br 2},x''_{t T 1 g 2})\tau(x'_{r \bT 2},x''_{t T 2})b_{\bT T}b'_{t}b''\\
 &\stackrel{(BT1)}{=} &aa'_{R}a''_{r \bR}{x_{R \bR 1}}^{g}{x'_{r \bT 1 g 1}}^{G}\otimes x''_{t T \bt 1 G}\otimes \tau(x'_{r \bT 1 g 2},x''_{t T \bt 2}){\tau(x_{R \bR 2},x'_{r \bT 2})}_{\bt}b_{\bT T}b'_{t}b'',
 \end{eqnarray*}
 so
 $$
 ((a\o x\o b)(a'\o x' \o b'))(a''\o x'' \o b'')=(a\o x\o b)((a'\o x' \o b')(a''\o x'' \o b'')).
 $$
 We check the unit as follows: for all $a\in A$, $x\in H$ and $b\in B$,
 \begin{eqnarray*}
 (a\o x\o b)(1_A\o 1_H\o 1_B)
 &=&a1_{A R}{x_{R 1}}^{G}\o 1_{H T 1 G}\o \tau(x_{R 2},1_{H T 2})b_T\\
 &\stackrel{(LB1)}{=} &a{x_{1}}^{G}\o 1_{H T 1 G}\o \tau(x_{2},1_{H T 2})b_T\\
 &\stackrel{(RB1)}{=} &a{x_{1}}^{G}\o 1_{H G}\o \tau(x_{2},1_{H})b\\
 &\stackrel{(LB3)}{=} &a\o x_1\o \tau(x_{2},1_{H})b\\
 &\stackrel{(RTC2)}{=} &a\o x\o b.
 \end{eqnarray*}
 \begin{eqnarray*}
 (1_A\o 1_H\o 1_B)(a\o x\o b)
 &=&a_R{1_{H R 1}}^{G}\o x_{T 1 G}\o \tau(1_{H R 2},x_{T 2})1_{B T}b\\
 &\stackrel{(LB1)}{=} &a{1_{H}}^{G}\o x_{T 1 G}\o \tau(1_{H},x_{T 2})1_{B T}b\\
 &\stackrel{(RB1)}{=} &a{1_{H}}^{G}\o x_{1 G}\o \tau(1_{H},x_{2})b\\
 &\stackrel{(LB3)}{=} &a\o x_{1}\o \tau(1_{H},x_{2})b\\
 &\stackrel{(RTC2)}{=} &a\o x\o b.
 \end{eqnarray*}

 $(\Longrightarrow)$ It is straightforward. \end{proof}

 \begin{rmk}\label{rmk:2.4} Taking either $A=K$ or $B=K$, we obtain the right Brzezi\'{n}ski crossed product and left twisted crossed product, respectively.
 \end{rmk}

 \begin{ex}\label{ex:2.5} (1) {\it (Two-sided twisted crossed product)} Let $H$ be a bialgebra and $A$, $B$ two algebras. Let $\s: H\o H\lr A$, $\tau: H\o H\lr B$, $R: H\o A\lr A\o H$ and $T: B\o H \lr H\o B$ be linear maps such that Eqs.$(LB1)$, $(LB2)$, $(RB1)$, $(RB2)$, $(LTC3)$, $(RTC3)$ hold. Then $A\#{_{R}^{\ \s} H^{\tau}_{T}}\# B$  $(=A\o H\o B$ as vector space)  is an associative algebra with unit $1_A\o 1_H\o {1_B}$ and multiplication
 $$
 (a\o x\o b)(a'\o x' \o b')=a a'_{R}\s(x_{R1}, x'_{T1})\o x_{R2}x'_{T2}\o \tau(x_{R3}, x'_{T3})b_{T}b'
 $$
 for all $a, a'\in A$, $x, x'\in H$ and $b, b'\in B$ if and only if the conditions  $(BT4)$ and
 \begin{eqnarray*}
 &(TC1)&\s(x_1, x'_1)a_{R}\o (x_2x'_2)_{R}\o \tau(x_3, x'_3)=a_{R r}\s(x_{r 1}, x'_{R 1})\o x_{r 2}x'_{R 2}\o \tau(x_{r 3}, x'_{R 3})\\
 &(TC2)&\s(x_1, x'_1)\s(x_2x'_2, x''_{T 1})\o x_3x'_3x''_{T 2}\o \tau(x_4x'_4, x''_{T 3})\tau(x_5, x'_5)_{T}\\
 &&~~~~~~~~=\s(x'_1, x''_1)_{R}\s(x_{R 1}, x'_2x''_2)\o x_{R 2}x'_3x''_3\o \tau(x_{R 3}, x'_4x''_4)\tau(x'_5, x''_5)\\
 &(TC3)&\s(x_{T 1}, x'_{t 1})\o x_{T 2}x'_{t 2}\o \tau(x_{T 3}, x'_{t 3})b_{T t}=\s(x_{1}, x'_{1})\o (x_{2}x'_{2})_{T}\o b_{T}\tau(x_{3}, x'_{3})
 \end{eqnarray*}
 are satisfied for all $a\in A$, $x, x', x''\in H$, $b, b'\in B$ and $R=r, T=t$.

 \begin{proof}  Let $G(x\o x')=\sigma (x_1, x'_1)\o x_2 x'_2$ in Theorem \ref{thm:2.3}.        \end{proof}

 (2) ({\it Two-sided twisted product}) Let $H$ be a bialgebra and $A$, $B$ two algebras. Let $\s: H\o H\lr A$, $\tau: H\o H\lr B$ be linear maps and $A\#^{\s} H$ left twisted product, $H^{\tau}\# B$ right twisted product.  Then $A\#{^{\s} H^{\tau}}\# B$  $(=A\o H\o B$ as vector space)  is an associative algebra with unit $1_A\o 1_H\o {1_B}$ and multiplication
 $$
 (a\o x\o b)(a'\o x' \o b')=a a'\s(x_{1}, x'_{1})\o x_{2}x'_{2}\o \tau(x_{3}, x'_{3})b b'
 $$
 for all $a, a'\in A$, $x, x'\in H$ and $b, b'\in B$.

 \begin{proof}  Let $R$ and $T$ be trivial in $A\#{_{R}^{\ \s} H^{\tau}_{T}}\# B$. And the rest is straightforward.        \end{proof}

 (3) ({\it Two-sided twisted tensor product}) Let $H$, $A$, $B$ be three algebras. Let $R: H\o A\lr A\o H$, $T: B\o H \lr H\o B$ be linear maps and $A\#_{R} H$ left twisted tensor product, $H_{T}\# B$ right twisted tensor product. Then $A\#{_{R} H_{T}}\# B$  $(=A\o H\o B$ as vector space)  is an associative algebra with unit $1_A\o 1_H\o {1_B}$ and multiplication
 $$
 (a\o x\o b)(a'\o x' \o b')=a a'_{R}\o x_{R}x'_{T}\o b_{T}b'
 $$
 for all $a, a'\in A$, $x, x'\in H$ and $b, b'\in B$ if and only if Eq.(BT4) holds.

 \begin{proof}  We only prove the necessity of $(BT4)$ as follows.  And the rest is straightforward by letting $\s$ and $\tau$ be trivial in $A\#{_{R}^{\ \s} H^{\tau}_{T}}\# B$.
 By  the associativity in $A\#{_{R} H_{T}}\# B$, we can get
 \begin{eqnarray*}
 a a'_{R}a''_{r}\o (x_{R} x'_{T})_ r x''_{t}\o (b_{T}b')_{t}b''=a (a'a''_{R})_{r}\o x_{r}(x'_{R}x''_{T})_t\o b_{t}b'_{T}b''.
 \end{eqnarray*}
 Let $a=a'=1_A$, $b'=b''=1_B$ and $x'=x''=1_H$ in the above equation. Then we obtain $(BT4)$.       \end{proof}

 (4) ({\it Two-sided crossed product}) (\cite{MLL}) Let $H$ be a bialgebra with a left weak action $\tl$ on the algebra $A$ and a right weak action $\tr$ on the algebra $B$. Let $\s: H\o H\ra A$ and $\tau: H\o H\ra B$ be linear maps and  $A\#_{\tl}^{\s} H$ left crossed product, $H^{\tau}_{\tr}\# B$ right crossed product. Then $A\#{_{\tl}^{\s} H^{\tau}_{\tr}}\# B$  $(=A\o H\o B$ as vector space)  is an associative algebra with unit $1_A\o 1_H\o {1_B}$ and multiplication
 $$
 (a\o x\o b)(a'\o x' \o b')=a(x_1\tl a')\s(x_2, x'_1)\o x_3x'_2\o \tau(x_4,x'_3)(b\tr x'_4)b'
 $$
 for all $a, a'\in A$, $x, x'\in H$ and $b, b'\in B$.
 \end{ex}

 \begin{proof} The result can be proved by letting $R(a\o x)=x_1\tl a\o x_2$ and $T(b\o x)=x_1\o b\tr x_2$ in Example \ref{ex:2.5} (1).   \end{proof}

 \begin{rmk}\label{rmk:2.6} (1) When $\s$ and $\tau$ are trivial in $A\#{_{\tl}^{\s} H^{\tau}_{\tr}}\# B$, we can obtain the two-sided smash product $A\# H\# B$ in \cite{Ma99,Ma97}.

 (2) Two-sided twisted tensor product $A\#{_{R} H_{T}}\# B$ in Example \ref{ex:2.5} (3) is exactly the iterated twisted tensor product in \cite[Theorem 2.1]{MLPVO} with the map $B\o A\lr A\o B$ trivial. In this case, Eq.(BT4) is exactly \cite[Eq.(2.1)]{MLPVO}. We note that Eq.(2.1) in \cite[Theorem 2.1]{MLPVO} is only a sufficient condition for $A\#{_{R} H_{T}}\# B$ to be an associative algebra. Here Eq.(BT4) is also necessary.

 (3) Two-sided twisted tensor product $A\#{_{R} H_{T}}\# B$ in Example \ref{ex:2.5} (3) can be seen as the iterated crossed product in \cite[Theorem 2.3]{Pa14} with the structure maps $Q: B\o A\lr A\o H\o B, b\o a\mapsto a\o 1_H\o b$, $\upsilon: A\o A\lr A\o H, a\o a'\mapsto a a'\o 1_H$ and $\sigma: B\o B\lr H\o B, b\o b'\mapsto 1_H\o b b'$. In this case, Eq.(BT4) is exactly \cite[Eq.(2.9)]{Pa14} and \cite[Eqs.(2.10) and (2.11)]{Pa14} hold automatically.  Eq.(2.9) in \cite[Theorem 2.3]{Pa14} is just one sufficient condition for $A\#{_{R} H_{T}}\# B$ to be an associative algebra.
 \end{rmk}

 \subsection{Brzezi\'{n}ski's two-sided crossed product bialgebra}

  \begin{thm} \label{thm:3.1} Let $A$, $B$ be bialgebras and $H$ a coalgebra with an element $1_H$ such that $\D(1_H)=1_H\o 1_H$. Let $G: H\otimes H \longrightarrow A\otimes H$, $R: H\otimes A \longrightarrow A\otimes H$, $T: B\o H \lr H\o B$ and $\tau: H\otimes H \longrightarrow B$ be linear maps such that $\v_B(\tau(x,x'))=\v_H(x)\v_H(x')$. Then $A\#{_{R}^{G} H^{\tau}_{T}}\# B$ equipped with the two-sided tensor product coalgebra $A\o H\o B$ becomes a bialgebra if and only if the following conditions hold:
  \begin{eqnarray*}
  &(G1)& \v_{A}(x^{G})\v_{H}({x'}_{G})=\v_{H}(x)\v_{H}(x');\\
  &(G2)&\v_{A}(a_{R})\v_{H}(x_{R})=\v_{A}(a)\v_{H}(x);
  \v_{B}(b_{T})\v_{H}(x_{T})=\v_{B}(b)\v_{H}(x);\\
  &(G3)& {a}_{R1}\otimes x_{R1}\otimes {a}_{R2}\otimes x_{R2}={a}_{1R}\otimes x_{1R}\otimes {a}_{2r}\otimes x_{2r};\\
  &(G4)& x_{T 1}\otimes b_{T 1}\otimes x_{T 2}\otimes b_{T 2}=x_{1 T}\otimes b_{1 T}\otimes x_{2 t}\otimes b_{2 t};\\
  &(G5)& {{x_{1}}^{G}}_{1}\otimes x'_{1 G 1}\otimes {\tau(x_{2},x'_{2})}_{1}\otimes {{x_{1}}^{G}}_{2}\otimes x'_{1 G 2}\otimes {\tau(x_{2},x'_{2})}_{2}\\
  &&~~~~~~~={x_{1}}^{G}\otimes x'_{1 G}\otimes \tau(x_{2},x'_{2})\otimes {x_{3}}^{g}\otimes x'_{3 g}\otimes \tau(x_{4},x'_{4}),
  \end{eqnarray*}
  where $a \in A$, $x, x'\in H$ and $b \in B$. In this case, we call this bialgebra {\it Brzezi\'{n}ski's two-sided crossed product bialgebra} and denoted by $A\star {_{R}^{G} H^{\tau}_{T}}\star B$.
  \end{thm}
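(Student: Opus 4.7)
The plan is to show, under the given algebra hypotheses of Theorem \ref{thm:2.3}, two things: first, that the counit $\v=\v_A\o\v_H\o\v_B$ of the tensor-product coalgebra on $A\o H\o B$ is multiplicative; second, that the comultiplication $\D(a\o x\o b)=(a_1\o x_1\o b_1)\o(a_2\o x_2\o b_2)$ is an algebra morphism from $A\#{_{R}^{G} H^{\tau}_{T}}\# B$ to its square. The coalgebra axioms themselves are automatic from the tensor-product coalgebra structure.

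For the counit, apply $\v$ to the product formula $aa'_R {x_{R1}}^G\o x'_{T1G}\o \tau(x_{R2},x'_{T2})b_Tb'$. Using the hypothesis $\v_B(\tau(x,x'))=\v_H(x)\v_H(x')$ to peel off $\tau$, then $(G1)$ to peel off $G$, and finally the two parts of $(G2)$ to peel off $R$ and $T$, this collapses to $\v_A(a)\v_A(a')\v_H(x)\v_H(x')\v_B(b)\v_B(b')$, which is exactly $\v(a\o x\o b)\v(a'\o x'\o b')$.

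For the comultiplication, I would expand $\D((a\o x\o b)(a'\o x'\o b'))$ by applying $\D_A, \D_H, \D_B$ to each factor of the product formula, and separately expand $\D(a\o x\o b)\D(a'\o x'\o b')$ using the product on the tensor square. To match the two expressions, first use $(G3)$ to commute the comultiplication past the map $R$ (so that ``$a'_R$ then $\D$'' can be rewritten as ``$\D$ then $R$ on each leg''), then use $(G4)$ analogously to commute $\D$ past $T$. These two reductions align the positions of $A$- and $B$-factors. Finally $(G5)$ is precisely the joint statement that ${x_1}^G\o x'_{1G}\o\tau(x_2,x'_2)\in A\o H\o B$ is \emph{group-like with respect to the $H\o H$ input up to duplication}; applying it turns the remaining paired occurrences of $G$ and $\tau$ (one for each Sweedler leg on the right-hand side) into the single paired occurrence on the left-hand side. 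A careful term-by-term comparison then yields equality.

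For necessity, one extracts each of $(G1)$--$(G5)$ by specializing $a, a', b, b', x, x'$ to units and/or applying the projections $\v_A, \v_H, \v_B$ at chosen slots of the identity $\D\circ m=(m\o m)\circ(\id\o\tau_{\mathrm{flip}}\o\id)\circ(\D\o\D)$ and of $\v\circ m=\v\o\v$. For instance, setting $a=a'=1_A$, $b=b'=1_B$ and applying $\v_A\o\id\o\v_B$ at appropriate spots isolates $(G5)$; setting $b=b'=1_B$, $x'=1_H$, $a=1_A$ and taking suitable counital contractions isolates $(G3)$, and symmetrically $(G4)$; the counit identity directly yields $(G1)$ and $(G2)$. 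The main obstacle is purely organizational: the Sweedler bookkeeping involves four twisting maps ($G,R,T,\tau$) whose arguments are themselves Sweedler components, so one must fix an unambiguous ordering in which $(G3)$--$(G5)$ are applied and carefully track which indices refer to the pre- versus post-twist comultiplications.
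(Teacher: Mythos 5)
Your proposal is correct and follows essentially the same route as the paper: sufficiency by checking that $\v_{A\o H\o B}$ is multiplicative via $(G1)$, $(G2)$ and the counit condition on $\tau$, and that $\D_{A\o H\o B}$ is multiplicative by commuting the comultiplications past $R$, $T$ and the $G$--$\tau$ pair using $(G3)$, $(G4)$, $(G5)$; necessity by specializing entries to units (e.g.\ $a=1_A$, $b=b'=1_B$, $x'=1_H$ for $(G3)$, and $a=a'=1_A$, $b=b'=1_B$ for $(G5)$, using $(LB1)$, $(LB3)$, $(RB1)$, $(RTC2)$ to strip the unit legs). The only omission is the trivial check that $1_A\o 1_H\o 1_B$ is grouplike, which is immediate from the hypotheses.
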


  \begin{proof} $(\Longleftarrow)$ First by Eqs.(G1) and (G2), we can show that $\v_{A\o H\o B}$ is an algebra map. Next, we check that $\D_{A\o H\o B}$ is an algebra map. In fact, for all $a, a'\in A$, $x, x'\in H$ and $b, b'\in B$, we have
  \begin{eqnarray*}
 &&\D_{A\o H\o B}((a\o x\o b)(a'\o x'\o b'))\\
 &=&\D_{A\o H\o B}(aa'_{R}{x_{R 1}}^{G}\otimes x'_{T 1 G}\otimes \tau(x_{R2},x'_{T2})b_{T}b')\\
 &=& a_{1}a'_{R 1}{{x_{R 1}}^{G}}_{1}\otimes x'_{T 1 G 1}\otimes {\tau(x_{R 2},x'_{T 2})}_{1}b_{T 1}b'_{1}\otimes a_{2}a'_{R 2}{{x_{R 1}}^{G}}_{2}\otimes x'_{T 1 G 2}\otimes {\tau(x_{R 2},x'_{T 2})}_{2}b_{T 2}b'_{2}\\
 &\stackrel{(G5)}{=}& a_{1}a'_{R 1}{x_{R 1 1}}^{G}\otimes x'_{T 1 1 G }\otimes \tau(x_{R 1 2},x'_{T 1 2})b_{T 1}b'_{1}\otimes a_{2}a'_{R 2}{x_{R 2 1}}^{g}\otimes x'_{T 2 1 g}\otimes \tau(x_{R 2 2},x'_{T 2 2})b_{T 2}b'_{2}\\
 &\stackrel{(G3)}{=}& a_{1}a'_{1 R}{x_{1 R 1}}^{G}\otimes x'_{T 1 1 G }\otimes \tau(x_{1 R 2},x'_{T 1 2})b_{T 1}b'_{1}\otimes a_{2}a'_{2 r}{x_{2 r 1}}^{g}\otimes x'_{T 2 1 g}\otimes \tau(x_{2 r 2},x'_{T 2 2})b_{T 2}b'_{2}\\
 &\stackrel{(G4)}{=}& a_{1}a'_{1 R}{x_{1 R 1}}^{G}\otimes x'_{1 T 1 G }\otimes \tau(x_{1 R 2},x'_{1 T 2})b_{1 T}b'_{1}\otimes a_{2}a'_{2 r}{x_{2 r 1}}^{g}\otimes x'_{2 t 1 g}\otimes \tau(x_{2 r 2},x'_{2 t 2})b_{2 t}b'_{2}\\
 &=&\D_{A\o H\o B}(a\o x\o b)\D_{A\o H\o B}(a'\o x'\o b')
 \end{eqnarray*}
 and
 \begin{eqnarray*}
  \D_{A\o H\o B}(1_A\o 1_H\o 1_B)
  &=&1_{A 1}\o 1_{H 1}\o 1_{B 1}\o 1_{A 2}\o 1_{H 2}\o 1_{B 2}\\
  &=&1_{A}\o 1_{H}\o 1_{B}\o 1_{A}\o 1_{H}\o 1_{B}.
 \end{eqnarray*}

 $(\Longrightarrow)$ Since $\v_{A\times H\times B}$ is an algebra map, then we have
 \begin{eqnarray*}
 &(BA2)&~~ \varepsilon_{A}(aa'_{R}{x_{R 1}}^{G})\varepsilon_{H}(x'_{T 1 G})\varepsilon_{B}(\tau(x_{R2},x'_{T2})b_{T}b')=\varepsilon_{A}(a)
 \varepsilon_{H}(x)\varepsilon_{B}(b)\varepsilon_{A}(a')\varepsilon_{H}(x')
 \varepsilon_{B}(b').
\end{eqnarray*}

 Let $a=a'=1_{A}$,$b=b'=1_{B}$ in Eq.$(BA2)$, we obtain $(G1)$. Similarly, $(G2)$ holds.

 By $\D((a\o x\o b)(a'\o x'\o b'))=\D(a\o x\o b)\D(a'\o x'\o b')$, we have
 \begin{eqnarray*}
 &(BA3)&~~ (aa'_{R}{x_{R 1}}^{G})_{1}\otimes x'_{T 1 G 1}\otimes (\tau(x_{R2},x'_{T2})b_{T}b')_{1}\otimes (aa'_{R}{x_{R 1}}^{G})_{2}\otimes x'_{T 1 G 2}\otimes (\tau(x_{R2},x'_{T2})b_{T}b')_{2}\\
 &&~~~~~~~=a_{1}a'_{1 R}{x_{1 R 1}}^{G}\otimes x'_{1 T 1 G }\otimes \tau(x_{1 R 2},x'_{1 T 2})b_{1 T}b'_{1}\otimes a_{2}a'_{2 r}{x_{2 r 1}}^{g}\otimes x'_{2 t 1 g}\otimes \tau(x_{2 r 2},x'_{2 t 2})b_{2 t}b'_{2}.
 \end{eqnarray*}
 Let $a=1_{A},b=b'=1_{B},x'=1_{H}$ in Eq.$(BA3)$, we have
 \begin{eqnarray*}
 &&(a'_{R}{x_{R 1}}^{G})_{1}\otimes 1_{H T 1 G 1}\otimes (\tau(x_{R 2},1_{H T 2})1_{B T})_{1}\otimes (a'_{R}{x_{R 1}}^{G})_{2}\otimes 1_{H T 1 G 2}\\
 &&\otimes (\tau(x_{R 2},1_{H T 2})1_{B T})_{2}=a'_{1 R}{x_{1 R 1}}^{G}\otimes 1_{H T 1 G }\otimes \tau(x_{1 R 2},1_{H T 2})1_{B T}\\
 &&\otimes a'_{2 r}{x_{2 r 1}}^{g}\otimes 1_{H t 1 g}\otimes \tau(x_{2 r 2},1_{H t 2})1_{B t}.
 \end{eqnarray*}
 Then we can obtain $(G3)$ by $(LB1), (LB3)$ and $(RB1), (RTC2)$. Liewise, one has $(G4)$ and $(G5)$.\end{proof}

 \begin{defi}\label{de:3.2} Let $A, B$ be two algebras, and $H$ be a coalgebra with a distinguished element $1_H\in H$. Let $G:H\o H\longrightarrow A\o H$ , $\tau:H\o H\longrightarrow B$ and $ S:H\longrightarrow H$ be linear maps. Then $S$ is called a {\it $(G,\tau)$-antipode of $H$} if for all $x\in H$, the following conditions hold:
 \begin{eqnarray*}
 (I1)~~{S(x_1)_1}^{G}\o x_{2 G}\o \tau(S(x_1)_2,x_3)=1_A\o 1_H\o 1_B\v_H(x);\\
 (I2)~~{x_1}^{G}\o S(x_3)_{1G}\o \tau(x_2,S(x_3)_2)=1_A\o 1_H\o 1_B\v_H(x).
 \end{eqnarray*}
 In this case, we call $H$ a $(G,\tau)$-Hopf algebra.
 \end{defi}

 \begin{rmk}\label{rmk:3.2a}(1) Let $G$ and $\tau$ be trivial, i.e., $x^G\o y_G=1_A\o xy$ and $\tau(x, y)=1_B\v(x)v(y)$ hold for all $x, y\in H$. Then $(I1)$ and $(I2)$ can ensure that $S$ is the antipode of $H$. Therefore in this case, the Hopf algebra $H$ is a $(G,\tau)$-Hopf algebra.\newline
 \indent{\phantom{\bf Remarks}}\quad (2) Let $\s: H\o H\longrightarrow A$ be linear map. When $G(x, x')=\s(x_1, x'_1)\o x_2x'_2$ in Definition \ref{de:3.2}, we call $H$ a {\it $(\s,\tau)$-Hopf algebra}.
 \end{rmk}

 \begin{pro} \label{pro:3.3}  Let $A, B$ be two Hopf algebras with antipode $S_A, S_B$, $H$ a coalgebra with an element $1_H$ such that $\D(1_H)=1_H\o 1_H$ and $S_H: H\lr H$ a linear map. Suppose that $A\star {_{R}^{G} H^{\tau}_{T}}\star B$ is a Brzezi\'{n}ski's two-sided crossed product bialgebra. Then $A\star {_{R}^{G} H^{\tau}_{T}}\star B$ is a Hopf algebra with antipode $\overline{S}$ defined by
 \begin{eqnarray}\label{eq:s}
 \overline{S}(a\o x\o b)=(1_A\o 1_H\o S_B(b))(1_A\o S_H(x)\o 1_B)(S_A(a)\o 1_H\o 1_B)
 \end{eqnarray}
 if and only if $H$ is a $(G,\tau)$-Hopf algebra.
 \end{pro}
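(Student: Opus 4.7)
The plan is to verify the two antipode axioms $m\circ(\overline{S}\o\id)\circ\D = u\circ\v$ and $m\circ(\id\o\overline{S})\circ\D = u\circ\v$ in $A\star {_{R}^{G} H^{\tau}_{T}}\star B$, where $\D$ is the tensor-product coproduct $\D(a\o x\o b)=a_1\o x_1\o b_1\o a_2\o x_2\o b_2$. Because this coalgebra decomposes as a genuine tensor product, Sweedler sums over $a$, $x$, $b$ separate, and the argument can be organized so that each axiom splits into three independent identities, one per factor.

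First I would record three preparatory facts, each obtained by direct application of the multiplication formula of Theorem~\ref{thm:2.3} together with $(LB1)$, $(LB3)$, $(RB1)$ and $(RTC2)$: (i) any element factors as $(a\o x\o b)=(a\o 1_H\o 1_B)(1_A\o x\o 1_B)(1_A\o 1_H\o b)$, so the algebra is generated by its three natural subalgebras; (ii) the middle-factor product collapses to $(1_A\o \chi\o 1_B)(1_A\o \chi'\o 1_B)=\chi_1^G\o\chi'_{1G}\o\tau(\chi_2,\chi'_2)$, which is exactly the shape of the left-hand sides of $(I1)$ and $(I2)$; (iii) $\overline{S}(1_A\o x\o 1_B)=1_A\o S_H(x)\o 1_B$ unconditionally, because $S_A(1_A)=1_A$ and $S_B(1_B)=1_B$ force all $R$- and $T$-twists to collapse.

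For the $(\Leftarrow)$ direction, assume $(I1)$ and $(I2)$. Setting $x=1_H$ in $(I1)$ collapses its left-hand side, via $(LB3)$ and $(RTC2)$, to $1_A\o S_H(1_H)\o 1_B$, forcing $S_H(1_H)=1_H$; this in turn gives $\overline{S}(a\o 1_H\o 1_B)=S_A(a)\o 1_H\o 1_B$ and $\overline{S}(1_A\o 1_H\o b)=1_A\o 1_H\o S_B(b)$. Substituting the factorization (i) for $(a_2\o x_2\o b_2)$ and the definition of $\overline{S}$ for $(a_1\o x_1\o b_1)$ into the left convolution, the three nested sandwiches $(S_A(a_1)\o 1\o 1)(a_2\o 1\o 1)$, $(1\o S_H(x_1)\o 1)(1\o x_2\o 1)$, $(1\o 1\o S_B(b_1))(1\o 1\o b_2)$ collapse independently to $\v_A(a)(1\o 1\o 1)$, $\v_H(x)(1\o 1\o 1)$, $\v_B(b)(1\o 1\o 1)$ by the antipode of $A$, $(I1)$, and the antipode of $B$, respectively. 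Multiplying yields the required $\v(a\o x\o b)(1\o 1\o 1)$. The right convolution is the mirror computation, using $(I2)$ in the middle.

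For the $(\Rightarrow)$ direction, restrict the two antipode identities to elements of the form $1_A\o x\o 1_B$: since $\overline{S}(1_A\o x\o 1_B)=1_A\o S_H(x)\o 1_B$ by (iii) and the product from (ii) is precisely the left-hand side of $(I1)$ (resp.\ $(I2)$), the antipode axioms on these elements read verbatim as $(I1)$ and $(I2)$. The main obstacle is not algebraic but notational: one must carefully track distinct copies of $R$ and $T$ across nested products and invoke $(LB1)$, $(LB3)$, $(RB1)$, $(RTC2)$ whenever a unit appears, so that the extraneous twists disappear and the computation cleanly reduces to the three decoupled sandwiches.
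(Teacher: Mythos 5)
Your proof is correct, but it takes a genuinely different route from the paper's. The paper proves the $(\Leftarrow)$ direction by brute force: it expands $(\overline{S}\ast \id)(a\o x\o b)$ and $(\id\ast\overline{S})(a\o x\o b)$ from the full multiplication formula, obtaining expressions with nested $R,r,T,t,G$ indices, and then invokes the associativity constraints $(LB2)$, $(RB2)$, $(BT2)$, $(BT3)$, $(BT4)$ to shuffle the twists into a position where $(I1)$, $(I2)$ and the antipodes of $A$ and $B$ apply; the $(\Rightarrow)$ direction is dismissed as ``obvious.'' You instead factor $(a\o x\o b)=(a\o 1_H\o 1_B)(1_A\o x\o 1_B)(1_A\o 1_H\o b)$, observe that the products within each of the three canonical subalgebras and the middle-factor product $(1_A\o x\o 1_B)(1_A\o x'\o 1_B)={x_1}^G\o x'_{1G}\o\tau(x_2,x'_2)$ are computed using only the unit conditions $(LB1)$, $(LB3)$, $(RB1)$, $(RTC2)$, and then collapse the convolution by associativity alone. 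This buys a cleaner argument: it never touches $(LB2)$, $(RB2)$ or $(BT2)$--$(BT4)$, and it makes transparent that $(I1)$ and $(I2)$ are precisely the antipode axioms restricted to the middle subcoalgebra $1_A\o H\o 1_B$ --- which also makes your $(\Rightarrow)$ direction more honest than the paper's. One point of wording to fix: the three sandwiches do not ``collapse independently'' --- the algebra is noncommutative and the factors do not commute --- rather they are nested and must be collapsed from the innermost pair outward, each collapse producing a scalar multiple of the unit so that the next two factors become adjacent; your later description of ``nested sandwiches'' and the order in which you apply the antipode of $A$, then $(I1)$, then the antipode of $B$ shows you have the correct mechanism, but the opening claim of independence should be restated as a telescoping argument.
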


 \begin{proof} $(\Longleftarrow)$  For all $a\in A, b\in B, x\in H$, and $R=r, T=t$, we have
 \begin{eqnarray*}
 &&(\overline{S}\ast id_{A\star{_{R}^{\ \s} H^{\tau}_{T}}\star B})(a\o x\o b)\\
 &=&S_A(a_1)_Ra_{2 r}{S_H(x_1)_{T R r 1}}^{G}\o x_{2 t 1 G}\o \tau(S_H(x_1)_{T R r 2},x_{2 t 2})S_B(b_1)_{T t}b_2\\
 &\stackrel{(LB2)}{=}& (S_A(a_1)a_2)_R{S_H(x_1)_{T R 1}}^{G}\o x_{2 t 1 G}\o \tau(S_H(x_1)_{T R 2},x_{2 t 2})S_B(b_1)_{T t}b_2\\
 &\stackrel{(BT3)}{=}&  {S_H(x_1)_1}^{G}\o x_{2 G T}\o S(b_1)_T\tau(S(x_1)_2,x_3)b_2\varepsilon(a)\\
 &\stackrel{(I1)}{=}& 1_A\o 1_{H T}\o S(b_1)_Tb_2\v(a)\v(x)\\
 &=&1_A\o 1_{H}\o 1_B\v(a)\v(x)\v(b)
 \end{eqnarray*}
 and
 \begin{eqnarray*}
 &&(id_{A\star{_{R}^{\ \s} H^{\tau}_{T}}\star B}\ast \overline{S})(a\o x\o b)\\
 &=&a_1S_A(a_2)_{R r}{x_{1 r 1}}^{G}\o S_H(x_2)_{T R t 1 G}\o \tau(x_{1 r 2},S_H(x_2)_{T R t 2})b_{1 t}S_B(b_2)_T\\
 &\stackrel{(BT4)}{=}& a_1S_A(a_2)_{R r}{x_{1 r 1}}^{G}\o S_H(x_2)_{R T t 1 G}\o \tau(x_{1 r 2},S_H(x_2)_{R T t 2})b_{1 t}S_B(b_2)_T\\
 &\stackrel{(RB2)}{=}& a_1S_A(a_2)_{R r}{x_{1 r 1}}^{G}\o S_H(x_2)_{R T 1 G}\o \tau(x_{1 r 2},S_H(x_2)_{R T 2})(b_{1}S_B(b_2))_T\\
 &=&a_1S_A(a_2)_{R r}{x_{1 r 1}}^{G}\o S_H(x_2)_{R 1 G}\o \tau(x_{1 r 2},S_H(x_2)_{R 2})\v(b)\\
 &\stackrel{(BT2)}{=}&a_1{x_1}^{G}S_A(a_2)_R\o S_H(x_3)_{1 G R}\o \tau(x_2,S_H(x_3)_2)\v(b)\\
 &\stackrel{(I2)}{=}&a_1S_A(a_2)_R\o 1_{H R}\o1_B\v(x)\v(b)\\
 &=&1_A\o 1_{H}\o 1_B\v(a)\v(x)\v(b).
 \end{eqnarray*}
 Thus, $\overline{S}$ is the antipode of $A\#{_{R}^{\ \s} H^{\tau}_{T}}\# B$.

 $(\Longrightarrow)$ Obvious.
 \end{proof}

 Next we only list two special cases. Other cases can be given similarly.

 \begin{cor} \label{cor:3.1a} Let $H$, $A$, $B$ be bialgebras. Let $\s: H\otimes H \longrightarrow A\otimes H$, $R: H\otimes A \longrightarrow A\otimes H$, $T: B\o H \lr H\o B$ and $\tau: H\otimes H \longrightarrow B$ be linear maps such that $\v_B(\tau(x,x'))=\v_H(x)\v_H(x')$. Then $A\#{_{R}^{\s} H^{\tau}_{T}}\# B$ equipped with the two-sided tensor product coalgebra $A\o H\o B$ becomes a bialgebra if and only if Eqs. $(G2)$-$(G4)$ and the following conditions hold:
 \begin{eqnarray*}
  &(J1)& \v_{A}(\s(x, x'))=\v_{H}(x)\v_{H}(x');\\
  &(J2)& {\s(x,x')}_{1}\o {\s(x,x')}_{2}={\s(x_{1},x'_{1})}\o \s(x_{2},x'_{2});\\
  &(J3)& {\tau(x,x')}_{1}\o {\tau(x,x')}_{2}={\tau(x_{1},x'_{1})}\o \tau(x_{2},x'_{2});\\
  &(J4)& {\tau(x_{1},x'_{1})}\o x_{2}x'_{2}={\tau(x_{2},x'_{2})}\o x_{1}x'_{1};\\
  &(J5)& {\s(x_{1},x'_{1})}\o x_{2}x'_{2}={\s(x_{2},x'_{2})}\o x_{1}x'_{1}\\
  &(J6)& \tau(x_{1},x'_{1})\otimes \sigma (x_{2},x'_{2}) =\tau(x_{2}, x'_{2})\otimes \sigma(x_{1},x'_{1}),
 \end{eqnarray*}
  where $x, x'\in H$.  In this case, we call this bialgebra {\it two-sided twisted crossed product bialgebra} and denoted by $A\star {_{R}^{\s} H^{\tau}_{T}}\star B$. Furthermore, $A\star {_{R}^{\s} H^{\tau}_{T}}\star B$ is a Hopf algebra with antipode Eq.(\ref{eq:s}) if and only if $H$ is a $(\s,\tau)$-Hopf algebra.
  \end{cor}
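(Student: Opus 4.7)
The plan is to deduce Corollary \ref{cor:3.1a} directly from Theorem \ref{thm:3.1} and Proposition \ref{pro:3.3} by specializing $G(x\o x')=\s(x_1,x'_1)\o x_2x'_2$, and invoking Remark \ref{rmk:3.2a}(2) for the Hopf algebra assertion.

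First, I would observe that the conditions $(G2)$, $(G3)$, $(G4)$ of Theorem \ref{thm:3.1} do not involve $G$ at all, hence they carry over unchanged to the statement of Corollary \ref{cor:3.1a}. Next, computing $(G1)$ under the substitution yields $\v_A(\s(x_1,x'_1))\v_H(x_2x'_2)=\v_H(x)\v_H(x')$, which by the counit axiom of $H$ collapses to $(J1)$.

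The main work is to show that the substituted form of $(G5)$ is equivalent to the conjunction of $(J2)$--$(J6)$. For the direction $(J2)$--$(J6)\Rightarrow (G5)$, I would expand both sides using coassociativity in $H$ together with the fact that $\D_H$ is an algebra map (so $(x_2x'_2)_1\o(x_2x'_2)_2=x_{21}x'_{21}\o x_{22}x'_{22}$), then apply $(J2)$ to the $\s$-slot, $(J3)$ to the $\tau$-slot, and $(J4)$, $(J5)$, $(J6)$ to permute the remaining $\s(x_i,x'_j)$, $x_k x'_\ell$ and $\tau(x_m,x'_n)$ factors until the two sides agree. Conversely, from $(G5)$ I would recover $(J2)$--$(J6)$ one at a time by applying $\v_H$, $\v_A$, $\v_B$ to appropriate tensor slots and by setting $x=1_H$ or $x'=1_H$ to strip off redundant factors, so that each $(J)$ identity becomes isolated.

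Finally, for the Hopf algebra part, I would invoke Proposition \ref{pro:3.3}: the bialgebra $A\star {_{R}^{\s} H^{\tau}_{T}}\star B$ admits the antipode \eqref{eq:s} if and only if $H$ is a $(G,\tau)$-Hopf algebra with $G(x\o x')=\s(x_1,x'_1)\o x_2x'_2$, which by Remark \ref{rmk:3.2a}(2) is by definition a $(\s,\tau)$-Hopf algebra. The hardest step will be the implication $(G5)\Rightarrow(J2)$--$(J6)$: a single identity in $A\o H\o B\o A\o H\o B$ must be separated into six distinct compatibility conditions, so care is needed in choosing specializations (arguments $1_H$, partial counits applied to the right tensor slots) that isolate each $(J)$ identity cleanly without losing information.
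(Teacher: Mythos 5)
Your proposal is correct and follows essentially the same route as the paper, whose entire proof is the remark that $(J2)$--$(J6)$ are equivalent to $(G5)$ for $G(x\o x')=\s(x_1,x'_1)\o x_2x'_2$ and that the rest (including $(G1)\Leftrightarrow(J1)$ and the antipode statement via Proposition \ref{pro:3.3} and Remark \ref{rmk:3.2a}(2)) is obvious. Your plan merely makes that equivalence explicit; note that applying partial counits to the appropriate tensor slots already isolates each $(J)$ identity, so the specializations $x=1_H$ or $x'=1_H$ are not actually needed.
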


  \begin{proof} Note here Eqs.$(J2)$-$(J6)$ are equivalent to Eq.$(G5)$ for $G(x, x')=\s(x_1, x'_1)\o x_2x'_2$. The rest is obvious.  \end{proof}

 \begin{cor} \label{cor:3.1b} Let $H$ be a bialgebra. Let $\s: H\o H\ra A$ be a linear map, where $A$ is a bialgebra with a left $H$-weak action such that $\v_A(\s(x, x'))=\v_H(x)\v_H(x')$. Let $\tau: H\o H\ra B$ be a linear map, where $B$ is a bialgebra with a right $H$-weak action such that $\v_B(1_B)=1, \v_B(\tau(x,x'))=\v_H(x)\v_H(x')$. The two-sided crossed product $A\#^{\s} H~^{\tau}\# B$ equipped with the two-sided tensor product coalgebra $A\o H\o B$ becomes a bialgebra if and only if Eqs.$(J2)$-$(J6)$, $(D4)$, $(F3)$, $(F4)$  and the following condition holds:
 \begin{eqnarray*}
 &(P1) & x_{2} \otimes (x_{1}\triangleright a)=x_{1} \otimes (x_{2}\triangleright a),~~~ (b\triangleleft x_{2}) \otimes x_{1} =(b \triangleleft x_{1})\otimes x_{2},
 \end{eqnarray*}
 where $x\in H, a\in A$. In this case, we call this bialgebra {\it two-sided crossed product bialgebra} and denoted by $A\star {^{\s} H^{\tau}}\star B$. Furthermore, $A\star {^{\s} H^{\tau}}\star B$ is a Hopf algebra with antipode Eq.(\ref{eq:s}) if and only if $H$ is a $(\s,\tau)$-Hopf algebra.
  \end{cor}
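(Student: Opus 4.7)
The plan is to deduce Corollary \ref{cor:3.1b} from Corollary \ref{cor:3.1a} by specialization, in exactly the same spirit as Example \ref{ex:2.5}(4) was deduced from Example \ref{ex:2.5}(1). Concretely, I would set
\[
R(x\o a)=x_1\tl a\o x_2,\qquad T(b\o x)=x_1\o b\tr x_2,
\]
and first check that, with these choices, the multiplication of $A\#{_{R}^{\s} H^{\tau}_{T}}\# B$ agrees with the multiplication of $A\#^{\s} H\,{^{\tau}}\# B$ given in Example \ref{ex:2.5}(4). Then the associativity hypotheses $(LB1)$--$(LB3),(RB1),(RB2),(LTC3),(RTC3)$ together with $(BT4)$ of Theorem \ref{thm:2.3} hold automatically (or become the weak-action / cocycle identities already used in Example \ref{ex:1.3} and Example \ref{ex:2.2}(5)), so the underlying algebra structure is exactly the two-sided crossed product.

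Next I would translate each bialgebra condition in Corollary \ref{cor:3.1a} under the substitution. Conditions $(J1)$--$(J6)$ involve only $\s,\tau$ and the coalgebra structures, so they transfer verbatim. Condition $(G2)$ becomes the compatibility of the weak actions with the counits, which is part of the standing hypothesis that $A,B$ are bialgebras with $H$-weak actions, so it is automatic. The core of the work is condition $(G3)$, which becomes
\[
(x_1\tl a)_1\o x_{21}\o (x_1\tl a)_2\o x_{22}=(x_{11}\tl a_1)\o x_{12}\o (x_{21}\tl a_2)\o x_{22},
\]
and the symmetric condition $(G4)$ for $\tr$. I would decompose each of these by applying appropriate counits: specializing $a=1_A$ (respectively $b=1_B$) isolates the ``cocommutativity-type'' identity, which becomes precisely the first (resp.\ second) half of $(P1)$; the remaining content recovers the compatibilities $(D4),(F3),(F4)$ already required for the one-sided bialgebras $A\star^{\s}H$ and $H^{\tau}\star B$. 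Conversely, given $(P1)$ and $(D4),(F3),(F4)$ one reassembles $(G3)$ and $(G4)$ by substituting the formulas for $R,T$ and using the coalgebra axioms. This gives the ``if and only if'' for the bialgebra part.

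For the Hopf algebra statement, I would invoke Proposition \ref{pro:3.3} with $G(x,x')=\s(x_1,x'_1)\o x_2 x'_2$. By Remark \ref{rmk:3.2a}(2), $H$ being a $(\s,\tau)$-Hopf algebra is precisely the condition for the antipode defined by \eqref{eq:s} to exist; substituting the specialized $G$ into $(I1),(I2)$ produces exactly the $(\s,\tau)$-antipode identities of Definition \ref{de:3.2}, and the convolution-inverse calculation is the one already performed in Proposition \ref{pro:3.3} (only now with the weak-action formulas for $R,T$).

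The main obstacle I expect is the bookkeeping in step two: condition $(G3)$ carries four tensor factors with two independent instances of $R$, and after the substitution one must cleanly separate the ``coaction on $a$'' content (which feeds $(D4)$) from the ``cocommutativity in $H$'' content (which feeds $(P1)$) without inadvertently assuming one to prove the other. I would handle this by applying $\v_H$ to the $H$-factors and $\v_A$ to the $A$-factors in the appropriate slots, generating the candidate decomposition, and then verifying that $(D4)\wedge(P1)$ implies $(G3)$ by direct substitution. The analogous argument for $(G4)$ using $(F3),(F4)$ and the second half of $(P1)$ is symmetric.
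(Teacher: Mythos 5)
Your proposal is correct in substance, but it follows a genuinely different route from the paper. The paper's own proof is a one-line specialization of Theorem \ref{thm:dcb}: set the left coaction $a\mapsto 1_H\o a$ and the right coaction $b\mapsto b\o 1_H$ to be trivial, so the two-sided smash coproduct degenerates to the tensor product coalgebra and the conditions $(C1)$--$(C6)$, $(D7)$--$(D22)$ collapse directly to the stated list (for instance $(D16)$ and $(D17)$ become the two halves of $(P1)$, $(D15)$ and $(D18)$ become $(F3)$ and $(F4)$, $(D11)$--$(D14)$ become $(J2)$--$(J5)$, and $(D21)$, $(D22)$ become vacuous or follow from the rest). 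You instead specialize the algebra-side result, Corollary \ref{cor:3.1a}, by taking $R(x\o a)=x_1\tl a\o x_2$ and $T(b\o x)=x_1\o b\tr x_2$, and then decompose $(G3)$ and $(G4)$ into $(F3)$, $(F4)$ and $(P1)$. This works: applying $\id\o\v_H\o\id\o\v_H$ to the specialized $(G3)$ yields $(F3)$, applying $\v_A\o\id\o\id\o\v_H$ yields the first half of $(P1)$ (using $(D4)$ to kill the $\v_A(x_1\tl a_1)$ factor), and conversely $(F3)$ together with $(P1)$ and coassociativity reassembles $(G3)$; the $T$-side is symmetric, and the Hopf-algebra statement follows from Proposition \ref{pro:3.3} with Remark \ref{rmk:3.2a}(2) exactly as you say. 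Your route buys a transparent explanation of where $(P1)$ comes from (it is precisely the cocommutativity content of $(G3)$, $(G4)$), while the paper's route gets the corollary for free as a degenerate case of its main theorem. One bookkeeping slip to fix: $(G2)$ is \emph{not} automatic from the weak-action axioms --- under your substitution it reads $\v_A(x\tl a)=\v_A(a)\v_H(x)$ and $\v_B(b\tr x)=\v_B(b)\v_H(x)$, which is exactly the condition $(D4)$ in the statement; it does not fall out of $(G3)$, $(G4)$, so you should attribute $(D4)$ to $(G2)$ rather than to the decomposition of $(G3)$, $(G4)$.
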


  \begin{proof} It is straightforward by setting the left and right coactions are trivial in Theorem \ref{thm:dcb}.  \end{proof}

 \begin{ex} \label{ex:ss}
 Let $H=KC_2=K\{1, a\}$ and $A=B=KC_4=K\{1, x, x^2, x^3\}$ be group Hopf algebras (\cite{Ra12}), where $C_i$ is cyclic group with order $i$, $i=2,4$. Define the linear maps $\tl: H\o A\lr A$, $\tr: B\o H\lr B$, $\s: H\o H\lr A$ and $\tau:H\o H\lr B$ by
 \begin{eqnarray*}
 &&1_H\tl 1_A=1_A,~1_H\tl x=x,~1_H\tl x^2=x^2,~1_H\tl x^3=x^3,\\
 &&a\tl 1_A=1_A,~a\tl x=x^3,~a\tl x^2=x^2,~a\tl x^3=x;
 \end{eqnarray*}
 \begin{eqnarray*}
 &&1_B\tr 1_H=1_B,~x\tr 1_H=x,~x^2\tr 1_H=x^2,~x^3\tr 1_H=x^3,\\
 &&1_B\tr a=1_B,~x\tr a=x^3,~ x^2\tr a=x^2,~ x^3\tr a=x;
 \end{eqnarray*}
 and
 \begin{eqnarray*}
 &&\s(1_H, 1_H)=1_A,~\s(1_H, a)=1_A,~\s(a, 1_H)=1_A,~\s(a, a)=x;\\
 &&\tau(1_H, 1_H)=1_A,~\tau(1_H, a)=1_A,~\tau(a, 1_H)=1_A,~\tau(a, a)=x,
 \end{eqnarray*}
 then by \cite[Example 2.11]{AM08}, we know that $\tl$ is a weak action of $H$ on $A$ and $\tr$ is a weak action of $H$ on $B$. Thus we can get two-sided crossed product $A\# {^{\s} H^{\tau}}\# B$. Also by Corollary \ref{cor:3.1b}, two-sided crossed product $A\#^{\s} H~^{\tau}\# B$ equipped with the two-sided tensor product coalgebra $A\o H\o B$ becomes a bialgebra, since $H$ and $A, B$ are cocommucative. But unfortunately $\bar{S}$ defined by Eq.(\ref{eq:s}) is not an antipode of $A\star {^{\s} H^{\tau}}\star B$, because Eq.(I1) doesn't hold for $a\in H$.
 \end{ex}

\section{An extended version of Majid's double biproduct}
 Firstly, we recall the construction of double crossed biproduct in \cite{MLL}.
 \begin{lem}(\cite{MLL})\label{lem:dcb}
 Let $H$ be a bialgebra. Let $\s: H\o H\ra A$ be a linear map, where $A$ is a left $H$-comodule coalgebra such that $\v_A(1_A)=1, \v_A(\s(x, x'))=\v_H(x)\v_H(x')$, and an algebra with a left $H$-weak action. Let $\tau: H\o H\ra B$ be a linear map, where $B$ is a right $H$-comodule coalgebra such that $\v_B(1_B)=1, \v_B(\tau(x,x'))=\v_H(x)\v_H(x')$, and an algebra with a right $H$-weak action. The two-sided crossed product $A\#^{\s} H~^{\tau}\# B$ equipped with the two-sided smash coproduct $A\times H\times B$ becomes a bialgebra if and only if the following conditions hold:
 \begin{eqnarray*}
 &(C1) & \v_{A},~\v_{B}\hbox{~are algebra maps};\\
 &(C2)&\D_A(1_A)=1_A\o 1_A,~\D_B(1_B)=1_B\o 1_B;\\
 &(C3) & {1_A}_{-1}\o {1_A}_{0}=1_H\o 1_A,~{1_B}_{[0]}\o {1_B}_{[1]}=1_B\o 1_H;\\
 &(C4) & \v_{A}( x\tl a)=\v_{A}(a)\v_{H}(x),~\v_{B}(b\tr x)=\v_{B}(b)\v_{H}(x);\\
 &(C5) & a_{1}\o a_{2-1}x\o a_{20}=a_{1}\s (a_{2-11},x_{1})\o a_{2-12}x_{2}\o a_{20};\\
 &(C6) & b_{1[0]}\o xb_{1[1]}\o b_{2}=b_{1[0]}\o x_{1}b_{1[1]}\o \tau(x_{2},b_{1[2]})b_{2};\\
 &(C7) & (aa')_{1}\o (aa')_{2-1}\o (aa')_{20}\\
 &&~~~~~~~ =a_{1}(a_{2-11}\tl a'_{1})\s(a_{2-12}, a'_{2-11})\o a_{2-13}a'_{2-12}\o a_{20}a'_{20};\\
 &(C8) & (bb')_{1[0]}\o (bb')_{1[1]}\o (bb')_{2}=b_{1[0]}b'_{1[0]}\o b_{1[1]}b'_{1[1]}\o \tau(b_{1[2]},b'_{1[2]})(b_2\tr b'_{1[3]})b'_{2};\\
 &(C9) & \s (x_{1},x'_{1})_{1}\o \s (x_{1},x'_{1})_{2-1}x_{2}x'_{2}\o \s(x_{1},x'_{1})_{20}=\s (x_{1},x'_{1})\o x_{2}x'_{2}\o \s(x_{3},x'_{3});\\
 &(C10) & \tau (x_{2},x'_{2})_{1[0]}\o x_{1}x'_{1}\tau (x_{2},x'_{2})_{1[1]}\o \tau(x_{2},x'_{2})_{2}=\tau (x_{1},x'_{1})\o x_{2}x'_{2}\o \tau(x_{3},x'_{3});\\
 &(C11) & (x_{1}\tl a)_{1}\o ( x_{1}\tl a)_{2-1}x_{2}\o (x_{1}\tl a)_{20}\\
 &&~~~~~~~=(x_{1}\tl a_{1})\s (x_{2},a_{2-11})\o x_{3}a_{2-12}\o x_{4}\tl a_{20};\\
 &(C12) & (b\tr x'_{2} )_{1[0]}\o x'_{1}(b\tr x'_{2})_{1[1]}\o (b\tr x'_{2})_{2}=b_{1[0]}\tr x'_{1}\o b_{1[1]}x'_{2}\o \tau(b_{1[2]},x'_{3})(b_{2}\tr x'_{4});\\
 &(C13) & a_{-1}x_{1}a'_{-1}x'_{1}\o \tau(x_{4},x'_{3})(b_{[0]}\tr x'_{4})b'_{[0]}\o a_{0}(x_{2}\tl a'_{0})\s (x_{3},x'_{2})\o x_{5}b_{[1]}x'_{5}b'_{[1]}\\
 &&~~~~~~~=a_{-11}x_{1}a'_{-11}x'_{1}\o \tau(a_{-12}x_{2},a'_{-12}x'_{2})(b_{[0]}\tr a'_{-13}x'_{3})b'_{[0]}\\
 &&~~~ ~~~~~~~~~~~~~~~~~~\o a_{0}(x_{3}b_{[1]}\tl a'_{0})\s (x_{4}b_{[2]},x'_{4}b'_{[1]})\o x_{5}b_{[3]}x'_{5}b'_{[2]}.
 \end{eqnarray*}
 In this case, we call this bialgebra the {\it double crossed biproduct} denoted by $A\star^{\s} H~{^{\tau}\star} B$.
 \end{lem}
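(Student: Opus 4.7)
My plan is to prove both directions of the biequivalence by treating the bialgebra axioms piece by piece, using specialization to units to extract each of the conditions (C1)--(C13) in the necessity direction, and reassembling them in the sufficiency direction. The underlying algebra structure is already an associative algebra (this is the two-sided crossed product that follows from Example~\ref{ex:2.5}(4) with trivial $G$ and $T$ given by the actions, or more directly it is a special case of Theorem~\ref{thm:2.3}), and the coalgebra structure $A\times H\times B$ is the two-sided smash coproduct from Definition~\ref{de:1.9}; so only bialgebra compatibility needs checking.

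First I would handle the easy axioms. Requiring $\varepsilon_{A\otimes H\otimes B}=\varepsilon_A\otimes\varepsilon_H\otimes\varepsilon_B$ to be multiplicative on a generic product $(a\otimes x\otimes b)(a'\otimes x'\otimes b')=a(x_1\triangleright a')\sigma(x_2,x_1')\otimes x_3x_2'\otimes\tau(x_4,x_3')(b\triangleleft x_4')b'$ forces, by appropriate specialization, the counit conditions (C1) and (C4) together with the assumed counit conditions on $\sigma,\tau$; while $\Delta(1_A\otimes 1_H\otimes 1_B)=(1_A\otimes 1_H\otimes 1_B)^{\otimes 2}$ forces (C2) and (C3). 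All of these follow from inspecting $\varepsilon$ and $\Delta$ with every slot but one set to a unit.

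For the main compatibility $\Delta\circ m=(m\otimes m)\circ(\mathrm{id}\otimes\mathrm{tw}\otimes\mathrm{id})\circ(\Delta\otimes\Delta)$, I would systematically specialize:
\begin{itemize}
\item $x=x'=1_H$, $b=b'=1_B$ isolates the comodule-coalgebra compatibility of $\Delta_A$ with the multiplication, yielding (C7); the mirror choice $x=x'=1_H$, $a=a'=1_A$ gives (C8).
\item $a=a'=1_A$, $b=b'=1_B$, $x=1_H$ extracts the coaction-on-product condition (C9) for $\sigma$, and symmetrically (C10) for $\tau$.
\item $a'=1_A$, $b=b'=1_B$, $x'=1_H$ extracts (C5) (the twisted comultiplicativity relating the $H$-coaction on $A$ to $\sigma$); symmetrically $a=a'=1_A$, $b=1_B$, $x=1_H$ gives (C6); and loading the weak action with a nontrivial $x\triangleright a$ or $b\triangleleft x'$ gives (C11) and (C12).
\item The fully generic case, where all six inputs are arbitrary, collapses (after applying the previously extracted conditions to simplify the four ``pure'' tensor slots) onto the residual cross-interaction, which is exactly (C13).
\end{itemize}
For sufficiency I would reverse the process: expand $\Delta((a\otimes x\otimes b)(a'\otimes x'\otimes b'))$ once using the definition of the smash coproduct on the crossed product output, and $\Delta(a\otimes x\otimes b)\cdot \Delta(a'\otimes x'\otimes b')$ once using the two-sided crossed product on the smash coproduct output, then push all coactions to the outside using (C5)--(C12) until both expressions are reduced to the same canonical form governed by (C13).

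The main obstacle is (C13). It is the only condition whose derivation cannot be isolated by sending inputs to units: it carries simultaneously the coaction of $a_{-1}$, the action $x\triangleright a'$, both cocycles $\sigma,\tau$, the right coaction on $b$, and the right action $b\triangleleft x'$. The challenge is purely bookkeeping: keeping track of Sweedler indices for $a,a',x,x',b,b'$ (each potentially splitting into four or five factors) so that after the six unit-specialized conditions have been used to rearrange coactions, the residue is literally (C13). I would organize the calculation by reducing both sides to a normal form with coactions pushed to the leftmost/rightmost tensor factors and $\sigma,\tau$ collected in the middle, after which (C13) is visibly the one identity still needed.
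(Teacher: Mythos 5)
The paper does not actually prove this lemma---it is quoted verbatim from \cite{MLL} as a known result---so there is no in-paper proof to compare against; your outline (reduce to multiplicativity of $\v$ and $\D$, extract (C1)--(C12) by specializing inputs to units, and obtain (C13) as the residual fully generic identity after normalizing with the other conditions) is the standard argument and is exactly the method this paper itself uses for the analogous Theorem \ref{thm:3.1} and for its later manipulation of (C13) in Theorem \ref{thm:dcb}. The sketch is correct in strategy, with the honest caveat you already flag: the derivation of (C13) is not a pure unit-specialization but requires first rewriting $\D$ of the six-fold product using (C5)--(C12), which is lengthy bookkeeping rather than a new idea.
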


 Next we give an improved version of the above result.

 \begin{thm}\label{thm:dcb}
 Under the assumption of Lemma \ref{lem:dcb}. The two-sided crossed product $A\#^{\s} H~^{\tau}\# B$ equipped with the two-sided smash coproduct $A\times H\times B$ becomes a bialgebra if and only if $(C1)-(C6)$ and the following conditions hold:
 \begin{eqnarray*}
 &(D7) &(aa')_{1}\otimes (aa')_{2}=a_{1}(a_{2-11}\triangleright a'_{1})\sigma(a_{2-12},a'_{2-1})\otimes a_{20}a'_{20};\\
 &(D8) & (aa')_{-1}\otimes (aa')_{0}=a_{-1}a'_{-1}\otimes a_{0}a'_{0};\\
 &(D9) &(bb')_{1}\otimes (bb')_{2}=b_{1[0]}b'_{1[0]}\otimes \tau (b_{1[1]},b'_{1[1]})(b_{2}\triangleleft b'_{1[2]})b'_{2};\\
 &(D10) &(bb')_{[0]}\otimes (bb')_{[1]}=b_{[0]}b'_{[0]}\otimes b_{[1]}b'_{[1]};\\
 &(D11) &\sigma(x_{1},x'_{1})_{-1}x_{2}x'_{2}\otimes \sigma(x_{1},x'_{1})_{0}=x_{1}x'_{1}\otimes \sigma(x_{2},x'_{2});\\
 &(D12) &\sigma(x,x')_{1}\otimes \sigma(x,x')_{2}=\sigma(x_{1},x'_{1})\otimes \sigma(x_{2},x'_{2});\\
 &(D13) &\tau(x_{2},x'_{2})_{[0]}\otimes x_{1}x'_{1}\tau(x_{2},x'_{2})_{[1]}=\tau(x_{1},x'_{1})\otimes x_{2}x'_{2};\\
 &(D14) &\tau(x,x')_{1}\otimes \tau(x,x')_{2}=\tau(x_{1},x'_{1})\otimes \tau(x_{2},x'_{2});\\
 &(D15) &(x\triangleright a)_{1}\otimes (x\triangleright a)_{2}=(x_{1}\triangleright a_{1})\sigma(x_{2},a_{2-1})\otimes x_{3}\triangleright a_{20};\\
 &(D16) &(x_{1}\triangleright a)_{-1}x_{2}\otimes (x_{1}\triangleright a)_{0}=x_{1}a_{-1}\otimes x_{2}\triangleright a_{0};\\
 &(D17) &(b\triangleleft x_{2})_{[0]}\otimes x_{1}(b\triangleleft x_{2})_{[1]}=b_{[0]}\triangleleft x_{1}\otimes b_{[1]}x_{2};\\
 &(D18) &(b\triangleleft x)_{1}\otimes (b\triangleleft x)_{2}=b_{1[0]}\triangleleft x_{1}\otimes \tau(b_{1[1]},x_{2})(b_{2}\triangleleft x_{3});\\
 &(D19) &a_{-1}x\otimes 1_{B} \otimes a_{0}=a_{-11}x_{1}\otimes \tau (a_{-12},x_{2})\otimes a_{0} ;\\
 &(D20) &b_{[0]}\otimes 1_{A}\otimes xb_{[1]}=b_{[0]}\otimes \sigma (x_{1},b_{[1]1})\otimes x_{2}b_{[1]2} ;\\
 &(D21) &b_{[1]}\tl a_0\o b_{[0]}\tr a_{-1}=a\o b ;\\
 &(D22) &\tau(x_{3},x'_{2})(b\triangleleft x'_{3})b'\otimes a(x_{1}\triangleright a')\sigma (x_{2},x'_{1})\\
 && =\tau(a_{-1}x_{1},a'_{-1}x'_{1})(b_{[0]} \triangleleft x'_{2})b'_{[0]}\otimes a_{0}(x_{2}\triangleright a'_{0})\sigma(x_{3}b_{[1]},x'_{3}b'_{[1]}) .
 \end{eqnarray*}
 \end{thm}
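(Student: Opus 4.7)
The plan is to invoke Lemma \ref{lem:dcb} and reduce the theorem to establishing that, under the standing hypotheses together with $(C1)$--$(C6)$, the system $(C7)$--$(C13)$ is logically equivalent to the refined system $(D7)$--$(D22)$. Once this equivalence is proved, the claim follows immediately from Lemma \ref{lem:dcb}.

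For the direction $(C7)$--$(C13)\Rightarrow (D7)$--$(D22)$, the principal technique is specialization of each $(Ci)$ by applying counits on selected tensor slots and setting auxiliary arguments equal to the respective units. For instance, applying $\v_H$ to the middle factor of $(C7)$ and using $(C4)$ collapses the coaction of $(aa')_2$ and yields $(D7)$; applying $\v_A$ to the first factor of $(C7)$ and invoking $(C1)$, $(C4)$, together with the hypothesis $\v_A(\s(x,x'))=\v_H(x)\v_H(x')$, collapses the weak action and cocycle contributions and produces $(D8)$. The same systematic device extracts the pairs $(D9)+(D10)$ from $(C8)$, $(D11)+(D12)$ from $(C9)$, $(D13)+(D14)$ from $(C10)$, $(D15)+(D16)$ from $(C11)$, and $(D17)+(D18)$ from $(C12)$. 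Conversely, each of $(C7)$--$(C12)$ is reassembled by composing the corresponding $(D)$-pair: thus $(C7)$ follows by first invoking $(D7)$ to compute $\Delta_A(aa')$ and then applying $(D8)$ inside each summand, using the left $H$-comodule coalgebra axioms to exchange the coaction with the comultiplication coherently.

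The main obstacle lies in the equivalence of $(C13)$ with the quadruple $(D19)$--$(D22)$, since $(C13)$ packs four distinct pieces of information into one four-fold tensor identity. To extract $(D19)$, one specializes $(C13)$ by taking $a=1_A$, $b=b'=1_B$, $x'=1_H$ and collapses the result using $(C1)$--$(C6)$ and the counit axioms; $(D20)$ is obtained dually by taking $a=a'=1_A$, $b=1_B$, $x=1_H$. The Majid double-biproduct condition $(D21)$ emerges by setting $x=x'=1_H$, $a=1_A$, $b'=1_B$ (so that all $\s$ and $\tau$ factors collapse to units via $(C3)$) and then applying $\v_H$ to the outer slots to isolate the crossed action/coaction compatibility. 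The residual compatibility $(D22)$ is what remains after the three simpler conditions have been factored out.

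Conversely, reassembling $(C13)$ from $(D19)$--$(D22)$ is the principal calculational hurdle: one starts from the right-hand side of $(C13)$, uses $(D19)$ to absorb the $\tau(a_{-12}x_2,a'_{-12}x'_2)$ factor, then $(D20)$ to absorb the $\s(x_4b_{[2]},x'_4b'_{[1]})$ factor, invokes $(D21)$ to rewrite the mixed term $b_{[\,\cdot\,]}\tl a_0\o b_{[\,\cdot\,]}\tr a_{-1}$, and finally applies $(D22)$ to collapse the remaining middle-factor identity to the left-hand side. Throughout this four-stage bookkeeping one repeatedly uses the $H$-comodule coalgebra axioms on $A$ and $B$, the Sweedler coassociativity, and the hypotheses $(C2)$, $(C3)$ on units; organizing these manipulations cleanly is the main technical effort.
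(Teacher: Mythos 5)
Your overall strategy coincides with the paper's: invoke Lemma \ref{lem:dcb}, split each of $(C7)$--$(C12)$ into its two $(D)$-components by applying counits to single tensor slots, and decompose $(C13)$ into $(D19)$--$(D22)$ by specializing variables to units. However, two of your specializations are wrong and would not produce the stated conditions. For $(D19)$ you must keep $a$ and $x'$ generic and set $x=1_H$, $a'=1_A$, $b=b'=1_B$ (then apply $\mathrm{id}\otimes\mathrm{id}\otimes\mathrm{id}\otimes\varepsilon$); this is what isolates $\tau(a_{-12},x'_2)$ from the factor $\tau(a_{-12}x_2,a'_{-12}x'_2)$ in $(C13)$. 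Your choice $a=1_A$, $x'=1_H$ instead isolates $\tau(x_2,a'_{-12})$ and yields the mirror identity $x_1a_{-1}\otimes 1_B\otimes(x_2\triangleright a_0)=x_1a_{-11}\otimes\tau(x_2,a_{-12})\otimes(x_3\triangleright a_0)$, which is not $(D19)$: the arguments of $\tau$ and the order of the product in $H$ are reversed, and $\tau$ is not symmetric. Similarly, for $(D20)$ one must set $x'=1_H$ (together with $a=a'=1_A$, $b=1_B$) so that the factor $\sigma(x_4b_{[2]},x'_4b'_{[1]})$ on the right of $(C13)$ reduces to $\sigma(x_\cdot,b'_{[1]})$; with your choice $x=1_H$ that factor collapses to $\varepsilon(x'_4)\varepsilon(b'_{[1]})1_A$ and the specialized identity becomes a tautology carrying no information.

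A second, smaller gap: $(D22)$ is not simply ``what remains.'' In the paper it is obtained by applying $\varepsilon\otimes\mathrm{id}\otimes\mathrm{id}\otimes\varepsilon$ to $(C13)$ and then performing a multi-step reduction of the resulting right-hand side using $(D19)$, $(D20)$, the crossed-product conditions $(LC3)$ and $(RC3)$, the comodule-coalgebra compatibilities, and $(D21)$; the same identities (together with the counited forms of $(D19)$ and $(D20)$) are needed again to reassemble $(C13)$ from $(D19)$--$(D22)$. Your sketch of the reassembly has the right shape, but without the corrected specializations and this intermediate calculation the equivalence $(C13)\Leftrightarrow(D19)$--$(D22)$ is not established.
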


 \begin{proof} By Lemma \ref{lem:dcb}, we only need to prove that the conditions $(C7)$-$(C13)$ are equivalent to $(D7)$-$(D22)$.
 \begin{enumerate}

 \item Step 1. Firstly, apply $id\otimes \varepsilon \otimes id$ to Eq.$(C7)$, we can get Eq.$(D7)$; apply $\varepsilon \otimes id \otimes id$ to Eq.$(C7)$, then Eq.$(D8)$ holds. On the other hand, by Eqs.$(D7)$ and $(D8)$, we have
 \begin{eqnarray*}
 (aa')_{1}\otimes (aa')_{2-1}\otimes (aa')_{20}
 &\stackrel{(D7)}{=} &a_{1}(a_{2-11}\triangleright a'_{1})\sigma (a_{2-12},a'_{2-1})\otimes (a_{20}a'_{20})_{-1}\otimes (a_{20}a'_{20})_{0}\\
 &\stackrel{(D8)}{=} &a_{1}(a_{2-11}\triangleright a'_{1})\sigma (a_{2-12},a'_{2-1})\otimes a_{20-1}a'_{20-1}\otimes a_{200}a'_{200}\\
 &=&a_{1}(a_{2-11}\triangleright a'_{1})\sigma(a_{2-12},a'_{2-11})\otimes a_{2-13}a'_{2-12}\otimes a_{20}a'_{20},
 \end{eqnarray*}
 i.e., Eq.$(C7)$ holds. Thus Eq.$(C7)$ $\Leftrightarrow$ Eqs.$(D7)$ and $(D8)$.

 Similarly, we can obtain that Eq.$(C8)$ $\Leftrightarrow$ Eqs.$(D9)$ and $(D10)$; Eq.$(C9)$ $\Leftrightarrow$ Eqs.$(D11)$ and $(D12)$; Eq.$(C10)$ $\Leftrightarrow$ Eqs.$(D13)$ and $(D14)$; Eq.$(C11)$ $\Leftrightarrow$ Eqs.$(D15)$ and $(D16)$; Eq.$(C12)$ $\Leftrightarrow$ Eqs.$(D17)$ and $(D18)$.

 \item Step 2. Secondly, let $x=1, a'=1, b=b'=1$ in Eq.$(C13)$, we have
 \begin{eqnarray}\label{eq:c13-1}
 a_{-1}x'_{1}\otimes 1_{B}\otimes a_{0}\otimes x'_{2}=a_{-11}x'_{1}\otimes \tau (a_{-12},x'_{2})\otimes a_{0}\otimes x'_{3}
  \end{eqnarray}
 Using $id\otimes id\otimes id\otimes \varepsilon$ to Eq.$(\ref{eq:c13-1})$, one can get Eq.$(D19)$. Applying $\v\o id\o id$ to Eq.(D19), we have
 \begin{eqnarray}\label{eq:d19-1}
 a\o 1_B\v(x)=a_{0}\o \tau(a_{-1}, x).
 \end{eqnarray}

 Set $a=a'=1, b=1, x'=1$ in Eq.$(C13)$, we have
 \begin{eqnarray}\label{eq:c13-2}
 x_{1}\otimes b'_{[0]}\otimes 1_{A}\otimes x_{2}b'_{[1]}=x_{1}\otimes b'_{[0]}\otimes \sigma(x_{2},b'_{[1]})\otimes x_{3}b'_{[2]}
  \end{eqnarray}
 Applying $\varepsilon \otimes id\otimes id\otimes id$ to Eq.$(\ref{eq:c13-2})$, one can get Eq.$(D20)$. Applying $id\o id\o \v$ to Eq.(D20), we have
 \begin{eqnarray}\label{eq:d20-1}
 b\o 1_A\v(x)=b_{[0]}\o \s(x, b_{[1]}).
 \end{eqnarray}

 Setting $a=1, b'=1, x=x'=1$ in Eq.$(C13)$, we have
 \begin{eqnarray}\label{eq:c13-4}
 a'_{-1}\otimes b_{[0]}\otimes a'_{0}\otimes b_{[1]}=a'_{-11}\otimes (b_{[0]}\triangleleft  a'_{-12})\otimes (b_{[1]}\triangleright a'_{0})\otimes b_{[2]}.
  \end{eqnarray}
 Applying $\varepsilon \otimes id\otimes id\otimes \varepsilon$ to Eq.$(\ref{eq:c13-4})$, we have Eq.$(D21)$.

 Applying $\varepsilon \otimes id\otimes id\otimes \v$ to Eq.$(C13)$, we have
 \begin{eqnarray}
 &&\tau(x_{3},x'_{2})(b\triangleleft x'_{3})b'\otimes a(x_{1}\triangleright a')\sigma (x_{2},x'_{1})\label{eq:c13-3}\\
 &&~~~~=\tau(a_{-1}x_{1},a'_{-11}x'_{1})(b_{[0]}\triangleleft a'_{-12}x'_{2})b'_{[0]}\otimes a_{0}(x_{2}b_{[1]1}\triangleright a'_{0})\sigma (x_{3}b_{[1]2},x'_{3}b'_{[1]}) .\nonumber
  \end{eqnarray}
 While
 \begin{eqnarray*}
 \hbox{RHS~ of~Eq}.(\ref{eq:c13-3})
 &\stackrel{(D19)(D20)}{=} &\tau(a_{-1}x_{1},a'_{-11}x'_{1})(b_{[0]}\triangleleft a'_{-12}x'_{2})\tau(a'_{-13},x'_{3})b'_{[0]}\\
 &&\otimes a_{0}\sigma(x_{2},b_{[1]1})(x_{3}b_{[1]2}\triangleright a'_{0})\sigma (x_{4}b_{[1]3},x'_{4}b'_{[1]})\\
 &\stackrel{(LC3)(RC3)}{=}&\tau(a_{-1}x_{1},a'_{-11}x'_{1})\tau(a'_{-12},x'_{2})
 [(b_{[0]}\triangleleft a'_{-13})\triangleleft x'_{3}]b'_{[0]}\\
 &&\otimes a_{0}(x_{2}\triangleright (b_{[1]1}\triangleright a'_{0}))\sigma(x_{3},b_{[1]2})\sigma (x_{4}b_{[1]3},x'_{4}b'_{[1]})\\
 &\stackrel{ }{=}&\tau(a_{-1}x_{1},a'_{-11}x'_{1})\tau(a'_{-12},x'_{2})
 ((b_{[0][0]}\triangleleft a'_{0-1})\triangleleft x'_{3})b'_{[0]}\\
 &&\otimes a_{0}(x_{2}\triangleright (b_{[0][1]}\triangleright a'_{00}))\sigma(x_{3},b_{[1]1})\sigma (x_{4}b_{[1]2},x'_{4}b'_{[1]})\\
 &\stackrel{(D21)}{=}&\tau(a_{-1}x_{1},a'_{-11}x'_{1})\tau(a'_{-12},x'_{2})
 (b_{[0]}\triangleleft x'_{3})b'_{[0]}\\
 &&\otimes a_{0}(x_{2}\triangleright a'_{0})\sigma(x_{3},b_{[1]1})\sigma (x_{4}b_{[1]2},x'_{4}b'_{[1]})\\
 &\stackrel{(D19)(D20)}{=} &\tau(a_{-1}x_{1},a'_{-1}x'_{1})(b_{[0]}\triangleleft x'_{2})b'_{[0]} \otimes a_{0}(x_{2}\triangleright a'_{0})\sigma(x_{3}b_{[1]},x'_{3}b'_{[1]})\\
 &=&\hbox{RHS~ of~Eq}.(D22)
 \end{eqnarray*}
 and LHS of Eq.(\ref{eq:c13-3}) is exactly LHS of Eq.$(D22)$, so we obtain Eq.$(D22)$.

 On the other hand, we have
 \begin{eqnarray*}
 \hbox{RHS~ of~Eq}.(C13)
 &\stackrel{(D20)}{=} &a_{-11}x_{1}a'_{-11}x'_{1}\otimes \tau (a_{-12}x_{2},a'_{-12}x'_{2})(b_{[0]}\triangleleft a'_{-13}x'_{3})b'_{[0]}\\
 &&\otimes a_{0}\sigma (x_{3},b_{[1]1})(x_{4}b_{[1]2}\triangleright a'_{0})\sigma (x_{5}b_{[1]3},x'_{4}b'_{[1]1})\otimes x_{6}b_{[1]4}x'_{5}b'_{[1]2}\\
 &\stackrel{(D19)}{=} &a_{-11}x_{1}a'_{-11}x'_{1}\otimes \tau (a_{-12}x_{2},a'_{-12}x'_{2})(b_{[0]}\triangleleft a'_{-13}x'_{3})\tau (a'_{-14},x'_{4})b'_{[0]}\\
 &&\otimes a_{0}\sigma (x_{3},b_{[1]1})(x_{4}b_{[1]2}\triangleright a'_{0})\sigma (x_{5}b_{[1]3},x'_{5}b'_{[1]1})\otimes x_{6}b_{[1]4}x'_{6}b'_{[1]2}\\
 &\stackrel{(LC3)(RC3)}{=} &a_{-11}x_{1}a'_{-11}x'_{1}\otimes \tau (a_{-12}x_{2},a'_{-12}x'_{2})\tau (a'_{-13},x'_{3})((b_{[0]}\triangleleft a'_{-14})\triangleleft x'_{4})b'_{[0]}\\
 &&\otimes a_{0}(x_{3}\triangleright (b_{[1]1}\triangleright a'_{0}))\sigma (x_{4},b_{[1]2})\sigma (x_{5}b_{[1]3},x'_{5}b'_{[1]1})\otimes x_{6}b_{[1]4}x'_{6}b'_{[1]2}\\
 &\stackrel{}{=} &a_{-11}x_{1}a'_{-11}x'_{1}\otimes \tau (a_{-12}x_{2},a'_{-12}x'_{2})\tau (a'_{-13},x'_{3})((b_{[0][0]}\triangleleft a'_{0-1})\triangleleft x'_{4})b'_{[0]}\\
 &&\otimes a_{0}(x_{3}\triangleright (b_{[0][1]}\triangleright a'_{00}))\sigma (x_{4},b_{[1]1})\sigma (x_{5}b_{[1]2},x'_{5}b'_{[1]1})\otimes x_{6}b_{[1]3}x'_{6}b'_{[1]2}\\
 &\stackrel{(D21)}{=} &a_{-11}x_{1}a'_{-11}x'_{1}\otimes \tau (a_{-12}x_{2},a'_{-12}x'_{2})\tau (a'_{-13},x'_{3})(b_{[0]}\triangleleft x'_{4})b'_{[0]}\\
 &&\otimes a_{0}(x_{3}\triangleright a'_{0})\sigma (x_{4},b_{[1]1})\sigma (x_{5}b_{[1]2},x'_{5}b'_{[1]1})\otimes x_{6}b_{[1]3}x'_{6}b'_{[1]2}\\
 &\stackrel{}{=} &a_{-11}x_{1}a'_{-11}x'_{1}\otimes \tau (a_{-12}x_{2},a'_{-12}x'_{2})\tau (a'_{0-1},x'_{3})(b_{[0][0]}\triangleleft x'_{4})b'_{[0]}\\
 &&\otimes a_{0}(x_{3}\triangleright a'_{00})\sigma (x_{4},b_{[0][1]})\sigma (x_{5}b_{[1]1},x'_{5}b'_{[1]1})\otimes x_{6}b_{[1]2}x'_{6}b'_{[1]2}\\
 &\stackrel{(\ref{eq:d19-1})(\ref{eq:d20-1})}{=} &a_{-11}x_{1}a'_{-11}x'_{1}\otimes \tau (a_{-12}x_{2},a'_{-12}x'_{2})(b_{[0]}\triangleleft x'_{3})b'_{[0]}\\
 &&\otimes a_{0}(x_{3}\triangleright a'_{0})\sigma (x_{4}b_{[1]1},x'_{4}b'_{[1]1})\otimes x_{5}b_{[1]2}x'_{5}b'_{[1]2}\\
 &\stackrel{}{=} &a_{-1}x_{1}a'_{-11}x'_{1}\otimes \tau (a_{0-1}x_{2},a'_{-12}x'_{2})(b_{[0]}\triangleleft x'_{3})b'_{[0][0]}\\
 &&\otimes a_{00}(x_{3}\triangleright a'_{0})\sigma (x_{4}b_{[1]1},x'_{4}b'_{[0][1]})\otimes x_{5}b_{[1]2}x'_{5}b'_{[1]}\\
 &\stackrel{}{=} &a_{-1}x_{1}a'_{-1}x'_{1}\otimes \tau (a_{0-1}x_{2},a'_{0-1}x'_{2})(b_{[0][0]}\triangleleft x'_{3})b'_{[0][0]}\\
 &&\otimes a_{00}(x_{3}\triangleright a'_{00})\sigma (x_{4}b_{[0][1]},x'_{4}b'_{[0][1]})\otimes x_{5}b_{[1]}x'_{5}b'_{[1]}\\
 &\stackrel{(D22)}{=}&\hbox{LHS~ of~Eq}.(C13).
  \end{eqnarray*}
 Thus Eq.$(C13)$ $\Leftrightarrow$ Eqs.$(D19)$-$(D22)$.  These finish the proof.
 \end{enumerate}\end{proof}

 \begin{rmk}\label{rmk:dcb}
 (1) Eq.$(D21)$ is exactly Eq.(\ref{eq:DB}) in the Majid's double biproduct. \newline
 \indent{\phantom{\bf Remarks}}\quad (2) The first half parts in Eqs.$(C1)$-$(C4)$ and Eqs.$(D5)$, $(D7)$, $(D8)$, $(D11)$, $(D12)$, $(D15)$, $(D16)$ are the conditions $A_1)$-$A_9)$ and twisted comodule cocycle in \cite[Theorem 2.5]{WJZ}. \newline
 \indent{\phantom{\bf Remarks}}\quad (3) The second half parts in Eqs.$(C1)$-$(C4)$ and Eqs.$(D6)$, $(D9)$, $(D10)$, $(D13)$, $(D14)$, $(D17)$, $(D18)$ are the conditions $(G1)$-$(G9)$ in \cite[Remark 2.3]{MLL} and $(C6)$ in \cite[Theorem 2.1]{MLL}. These are the necessary and sufficient conditions for the right crossed product $H~^{\tau}\# B$ equipped with the right smash coproduct $H\times B$ becomes a bialgebra.

 \end{rmk}

 \begin{pro}\label{pro:dcb-1} Let $H$ be a bialgebra. Let $\s: H\o H\ra A$ be a linear map, where $A$ is a left $H$-comodule coalgebra such that $\v_A(1_A)=1$, and an algebra with a left $H$-weak action. Let $B$ be a right $H$-module algebra and also a comodule coalgebra such that $\v_B(1_B)=1$. The one-sided crossed product $A\#^{\s} H\# B$ equipped with the two-sided smash coproduct $A\times H\times B$ becomes a bialgebra if and only if Eqs.$(C1)$-$(C5)$, $(D7)$, $(D8)$, $(D10)$-$(D12)$, $(D15)$-$(D17)$, $(D20)$, $(D21)$ and the following conditions hold:
 \begin{eqnarray*}
 &(E1) &(bb')_{1}\otimes (bb')_{2}=b_{1}b'_{1[0]}\otimes (b_{2}\triangleleft b'_{1[1]})b'_{2};\\
 &(E2) &(b\triangleleft x)_{1}\otimes (b\triangleleft x)_{2}=(b_{1}\triangleleft x_{1})\otimes (b_{2}\triangleleft x_{2});\\
 &(E3) &(b_{[0]}\triangleleft x'_{2})b'\otimes  (x_{1}\triangleright a)\sigma (x_{2},x'_{1})=(b_{[0]}\triangleleft x'_{1})b'_{[0]}\otimes (x_{1}\triangleright a)\sigma(x_{2}b_{[1]},x'_{2}b'_{[1]}) .
 \end{eqnarray*}
 \end{pro}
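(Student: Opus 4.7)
The plan is to derive Proposition~\ref{pro:dcb-1} as a direct specialization of Theorem~\ref{thm:dcb} to the case where the cocycle $\tau$ is trivial, namely $\tau(x,x')=\v_H(x)\v_H(x')1_B$. The hypothesis that $B$ is a genuine right $H$-module algebra (rather than only carrying a weak action twisted by a nontrivial cocycle) is precisely what this trivialization models: substituting $\tau(x,x')=\v_H(x)\v_H(x')1_B$ into the multiplication of Example~\ref{ex:2.5}(4) yields exactly the product $(a\o x\o b)(a'\o x'\o b')=a(x_1\tl a')\s(x_2,x'_1)\o x_3x'_2\o (b\tr x'_3)b'$ of the one-sided crossed product $A\#^{\s} H\# B$ considered here, while the two-sided smash coproduct on $A\o H\o B$ is unchanged.

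The body of the proof is a condition-by-condition audit of $(C1)$--$(C6)$ and $(D7)$--$(D22)$. Three outcomes occur. First, the conditions $(C1)$--$(C5)$, $(D7)$, $(D8)$, $(D10)$--$(D12)$, $(D15)$--$(D17)$, $(D20)$, $(D21)$ do not involve $\tau$ and so transfer verbatim. Second, the conditions $(C6)$, $(D13)$, $(D14)$, $(D19)$ all collapse to trivial identities once $\tau(x,x')$ is replaced by $\v_H(x)\v_H(x')1_B$: the spurious counit factors $\v_H(x_i)$ and $\v_H(b_{1[i]})$ are absorbed by the iterated coalgebra and comodule counit axioms (e.g.\ $x_1\v_H(x_2)=x$, $b_{1[0]}\v_H(b_{1[1]})=b_1$), leaving the left-hand side on both sides. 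Third, the three remaining conditions $(D9)$, $(D18)$, $(D22)$ reduce under the same counit absorptions to $(E1)$, $(E2)$, $(E3)$ respectively; for instance $(D18)$ becomes $(b\tl x)_1\o(b\tl x)_2=(b_1\tl x_1)\o (b_2\tl x_2)$ after collapsing $\tau(b_{1[1]},x_2)=\v_H(b_{1[1]})\v_H(x_2)1_B$, which is $(E2)$.

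The only step that is not purely mechanical is recognizing that $(D22)\!\mid_{\tau\text{ trivial}}$ is equivalent to $(E3)$. Absorbing the counit factors from $\tau(x_3,x'_2)$ on the LHS and from $\tau(a_{-1}x_1,a'_{-1}x'_1)$ on the RHS, $(D22)$ collapses to
\begin{equation*}
(b\tl x'_2)b'\o a(x_1\tr a')\s(x_2,x'_1)=(b_{[0]}\tl x'_1)b'_{[0]}\o a(x_1\tr a')\s(x_2b_{[1]},x'_2b'_{[1]}).
\end{equation*}
Both sides are of the form $[\cdots]\o a[\cdots]$ with $a\in A$ arbitrary, so the identity is equivalent to its specialization at $a=1_A$, which after renaming $a'\mapsto a$ is exactly $(E3)$; conversely, left-multiplying $(E3)$ in the $A$-factor by an arbitrary element of $A$ recovers the displayed simplified $(D22)$. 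The $b_{[0]}$ on the LHS of $(E3)$ that has no companion $b_{[1]}$ is to be read via $b_{[0]}\v_H(b_{[1]})=b$; this is a notational convention that keeps the right-comodule bookkeeping symmetric with the RHS. With this step settled, the list of necessary and sufficient conditions coming from Theorem~\ref{thm:dcb} reduces exactly to the list in the statement.
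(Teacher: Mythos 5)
Your proposal is correct and takes essentially the same route as the paper: the paper's entire proof is the one-line specialization ``let $\tau$ be trivial, i.e., $\tau(x,y)=\v(x)\v(y)1_B$, in Theorem \ref{thm:dcb}.'' Your condition-by-condition audit --- which conditions are $\tau$-free, which trivialize via the counit axioms, and how $(D9)$, $(D18)$, $(D22)$ collapse to $(E1)$, $(E2)$, $(E3)$ (including the reading of the unpaired $b_{[0]}$ in $(E3)$ through $b_{[0]}\v_H(b_{[1]})=b$) --- simply makes explicit the bookkeeping the paper leaves to the reader.
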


 \begin{proof} Let $\tau$ be trivial, i.e., $\tau(x,y)=\v(x)\v(y)1_B$ in Theorem \ref{thm:dcb}.  \end{proof}

 \begin{rmk}\label{rmk:dcb-1}
 (1) By Eq.$(D21)$, Eq.(\ref{eq:DB}) in the Majid's double biproduct is not only sufficient but also necessary. \newline
 \indent{\phantom{\bf Remarks}}\quad (2) The conditions in Proposition \ref{pro:dcb-1} are equivalent to the ones in \cite[Theorem 3.2]{MJS}.

 \begin{proof}  By Eq.$(D22)$, we have
 \begin{eqnarray}\label{eq:d22-1}
 (b_{[0]}\triangleleft x'_{1})b'_{[0]}\otimes a(x_{1}\triangleright a')\sigma(x_{2}b_{[1]},x'_{2}b'_{[1]})=(b\triangleleft x'_{2})b'\otimes a(x_{1}\triangleright a')\sigma(x_{2},x'_{1}).
  \end{eqnarray}
 Setting $a=1=a', b'=1$ in Eq.(\ref{eq:d22-1}), we have
 \begin{eqnarray}\label{eq:d22-2}
 (b_{[0]}\triangleleft x'_{1})\otimes \sigma (xb_{[1]},x'_{2})=(b\triangleleft x'_{2})\otimes \sigma(x,x'_{1}).
 \end{eqnarray}
 Setting $a=1=a', b=1$ in Eq.(\ref{eq:d22-1}), we get
 \begin{eqnarray}\label{eq:d22-3}
 b'_{[0]}\otimes \sigma (x,x'b'_{[1]})=b'\otimes \sigma(x,x').
 \end{eqnarray}
 On the other hand, we have
 \begin{eqnarray*}
 &&(b_{[0]}\triangleleft x'_{1})b'_{[0]}\otimes a(x_{1}\triangleright a')\sigma(x_{2}b_{[1]},x'_{2}b'_{[1]})\\
 &\stackrel{(\ref{eq:d22-3})}{=} &(b_{[0]}\triangleleft x'_{1})b'\otimes a(x_{1}\triangleright a')\sigma(x_{2}b_{[1]},x'_{2})\\
 &\stackrel{(\ref{eq:d22-2})}{=} &(b\triangleleft x'_{2})b'\otimes a(x_{1}\triangleright a')\sigma(x_{2},x'_{1}).
  \end{eqnarray*}
 Therefore Eq.(\ref{eq:d22-1})$\Leftrightarrow$ Eqs.(\ref{eq:d22-2}) and (\ref{eq:d22-3}).

 $(\Longrightarrow)$ We only check that Eq.$(C14)$ in \cite[Theorem 3.2]{MJS} holds. In fact, one can compute as follows.
 \begin{eqnarray*}
 \hbox{LHS~ of~Eq}.(C14)
 &\stackrel{ }{=}&(b_{[0]}\triangleleft x'_{2})b'_{[0]}\otimes (x_{1}\triangleright a')\sigma(x_{2},x'_{1})\otimes x_3b_{[1]}x'_3b'_{[1]}\\
 &\stackrel{(\ref{eq:d22-2})}{=} &(b_{[0][0]}\triangleleft x'_{1})b'_{[0]}\otimes (x_{1}\triangleright a')\sigma(x_{2}b_{[0][1]},x'_{2})\otimes x_3b_{[1]}x'_3b'_{[1]}\\
 &\stackrel{(\ref{eq:d22-3})}{=} &(b_{[0][0]}\triangleleft x'_{1})b'_{[0][0]}\otimes (x_{1}\triangleright a')\sigma(x_{2}b_{[0][1]},x'_{2}b'_{[0][1]})\otimes  x_3b_{[1]}x'_3b'_{[1]}\\
 &\stackrel{ }{=}&(b_{[0]}\triangleleft x'_{1})b'_{[0]}\otimes (x_{1}\triangleright a')\sigma(x_{2}b_{[1]1},x'_{2}b'_{[1]1})\otimes   x_3b_{[1]2}x'_3b'_{[1]2}\\
 &\stackrel{(\ref{eq:d20-1})}{=} &(b_{[0][0]}\triangleleft x'_{1})b'_{[0]}\otimes (x_{1}\triangleright a')\sigma(x_{2},b_{[0][1]})\sigma(x_{3}b_{[1]1},x'_{2}b'_{[1]1})\otimes x_4b_{[1]2}x'_3b'_{[1]2}\\
 &\stackrel{ }{=} &(b_{[0]}\triangleleft x'_{1})b'_{[0]}\otimes (x_{1}\triangleright a')\sigma(x_{2},b_{[1]1})\sigma(x_{3}b_{[1]2},x'_{2}b'_{[1]1})\otimes x_4b_{[1]3}x'_3b'_{[1]2}\\
 &\stackrel{(D21)}{=} &(b_{[0][0]}\triangleleft a'_{-1}x'_{1})b'_{[0]}\otimes [x_{1}\triangleright (b_{[0][1]}\triangleright a'_{0})]\sigma(x_{2},b_{[1]1})\sigma(x_{3}b_{[1]2},x'_{2}b'_{[1]1})\\
 &&\otimes x_4b_{[1]3}x'_3b'_{[1]2}\\
 &\stackrel{ }{=} &(b_{[0]}\triangleleft a'_{-1}x'_{1})b'_{[0]}\otimes [x_{1}\triangleright (b_{[1]1}\triangleright a'_{0})]\sigma(x_{2},b_{[1]2})\sigma(x_{3}b_{[1]3},x'_{2}b'_{[1]1})\\
 &&\otimes x_4b_{[1]4}x'_3b'_{[1]2}\\
 &\stackrel{(LC3)}{=} &(b_{[0]}\triangleleft a'_{-1}x'_{1})b'_{[0]}\otimes \sigma(x_{1},b_{[1]1})(x_{2}b_{[1]2}\triangleright a'_{0})\sigma(x_{3}b_{[1]3},x'_{2}b'_{[1]1})\\
 &&\otimes x_4b_{[1]4}x'_3b'_{[1]2}\\
 &\stackrel{(D20)}{=} &(b_{[0]}\triangleleft a'_{-1}x'_{1})b'_{[0]}\otimes (x_{1}b_{[1]1}\triangleright a'_{0})\sigma(x_{2}b_{[1]2},x'_{2}b'_{[1]1})\otimes x_3b_{[1]3}x'_3b'_{[1]2}\\
 &=&\hbox{RHS~ of~Eq}.(C14)
 \end{eqnarray*}

 $(\Longleftarrow)$ Let $b=1, a'=1, x'=1$ in Eq.$(C14)$, then we can get Eq.$(D20)$; Applying $id\o id\o \v$ to Eq.$(C14)$, one can obtain
 \begin{eqnarray}\label{eq:c14-1}
 (b\triangleleft x'_{2})b'\otimes (x_{1}\triangleright a')\sigma(x_{2},x'_{1})=(b_{[0]}\triangleleft a'_{-1}x'_{1})b'_{[0]}\otimes (x_{1}b_{[1]1}\triangleright a'_{0})\sigma(x_{2}b_{[1]2},x'_{2}b'_{[1]}).
 \end{eqnarray}
 Let $a'=1, b'=1$ in Eq.(\ref{eq:c14-1}), we get Eq.(\ref{eq:d22-2}). Likewise, we can obtain Eqs.(\ref{eq:d22-3}) and $(D21)$.

 Up to now, we have checked that Eqs.$(D20), (D21), (E3)$ $\Leftrightarrow$ Eq.$(C14)$ in \cite[Theorem 3.2]{MJS}. And the rest is obvious.  \end{proof}
 \end{rmk}

 \begin{cor}\label{cor:dcb-2} Let $H$ be a bialgebra. Let $A$ be a left $H$-comodule coalgebra such that $\v_A(1_A)=1$, and an algebra with a left $H$-action $\tl$. Let $B$ be a right $H$-comodule coalgebra such that $\v_B(1_B)=1$, and an algebra with a right $H$-action $\tr$. The two-sided smash product $A\# H\# B$ equipped with the two-sided smash coproduct $A\times H\times B$ becomes a bialgebra if and only if Eqs.$(C1)$-$(C4)$, $(D8)$,$(D10)$,$(D16)$,$(D17)$,$(D21)$ and the following conditions hold:
 \begin{eqnarray*}
 &(F1) &(aa')_{1}\otimes (aa')_{2}=a_{1}(a_{2-1}\triangleright a'_{1})\otimes a_{20}a'_{2};\\
 &(F2) &(bb')_{1}\otimes (bb')_{2}=b_{1}b'_{1[0]}\otimes (b_{2}\triangleleft b'_{1[1]})b'_{2};\\
 &(F3) &(x\triangleright a)_{1}\otimes (x\triangleright a)_{2}=(x_{1}\triangleright a_{1})\otimes (x_{2}\triangleright a_{2});\\
 &(F4) &(b\triangleleft x)_{1}\otimes (b\triangleleft x)_{2}=(b_{1}\triangleleft x_{1})\otimes (b_{2}\triangleleft x_{2}).
 \end{eqnarray*}
 \end{cor}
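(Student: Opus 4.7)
The plan is to specialize Theorem \ref{thm:dcb} to the case of trivial cocycles, namely $\sigma(x,x') = \varepsilon_H(x)\varepsilon_H(x')\,1_A$ and $\tau(x,x') = \varepsilon_H(x)\varepsilon_H(x')\,1_B$. Under this specialization the multiplication of $A\#^\sigma H\,{}^\tau\!\#B$ collapses to the two-sided smash product $A\#H\#B$, so the result will follow once I show that the system $(C1)$--$(C4)$ together with $(D7)$--$(D22)$ reduces exactly to the list $(C1)$--$(C4)$, $(D8)$, $(D10)$, $(D16)$, $(D17)$, $(D21)$, $(F1)$--$(F4)$.

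I would work through the conditions in four groups. First, the conditions that do not involve $\sigma$ or $\tau$ at all, i.e.\ $(C1)$--$(C4)$, $(D8)$, $(D10)$, $(D16)$, $(D17)$, $(D21)$, are preserved verbatim. Second, the conditions that become automatic: each of $(C5)$, $(C6)$, $(D11)$--$(D14)$, $(D19)$, $(D20)$ involves a $\sigma$ or $\tau$ that, once trivial, gets absorbed by the counit axiom on the coactions, so both sides collapse to the same expression (for instance, in $(D19)$ the right-hand side becomes $a_{-11}x_1\o\varepsilon(a_{-12})\varepsilon(x_2)1_B\o a_0=a_{-1}x\o 1_B\o a_0$). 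Third, the four conditions $(D7)$, $(D9)$, $(D15)$, $(D18)$ each contain exactly one factor of $\sigma$ or $\tau$; killing that factor by its counit value simplifies the remaining tensor leg via the comodule counit axiom, turning them directly into $(F1)$, $(F2)$, $(F3)$, $(F4)$ respectively.

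The main obstacle is the long condition $(D22)$. Here I would expand both sides carefully. On the left one gets $\varepsilon(x_3)\varepsilon(x'_2)(b\tr x'_3)b'\o a(x_1\tr a')\varepsilon(x_2)\varepsilon(x'_1)=(b\tr x')b'\o a(x\tr a')$. On the right the scalar factor $\varepsilon(\tau(a_{-1}x_1,a'_{-1}x'_1))\varepsilon(\sigma(x_3b_{[1]},x'_3b'_{[1]}))$ becomes $\varepsilon(a_{-1})\varepsilon(x_1)\varepsilon(a'_{-1})\varepsilon(x'_1)\varepsilon(x_3)\varepsilon(b_{[1]})\varepsilon(x'_3)\varepsilon(b'_{[1]})$, which, after applying the left and right comodule counit axioms to remove the $b_{[1]}$, $b'_{[1]}$, $a_{-1}$, $a'_{-1}$ legs and the counits to $x_i$ and $x'_i$, also reduces to $(b\tr x')b'\o a(x\tr a')$. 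Thus $(D22)$ holds identically with trivial cocycles and imposes no new condition.

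Finally, after verifying the above correspondences, I would collect what remains: exactly the axioms $(C1)$--$(C4)$, $(D8)$, $(D10)$, $(D16)$, $(D17)$, $(D21)$, $(F1)$--$(F4)$ listed in the statement. The equivalence in Theorem \ref{thm:dcb} then yields the required ``if and only if'' for the two-sided smash product/coproduct bialgebra structure on $A\o H\o B$.
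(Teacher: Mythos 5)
Your proposal is correct and follows exactly the paper's route: the paper's own proof is the one-line specialization ``let $\s$ and $\tau$ be trivial in Theorem \ref{thm:dcb}'', and your detailed verification that $(C5)$, $(C6)$, $(D11)$--$(D14)$, $(D19)$, $(D20)$, $(D22)$ become automatic while $(D7)$, $(D9)$, $(D15)$, $(D18)$ collapse to $(F1)$--$(F4)$ is the routine check the authors leave implicit.
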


 \begin{proof}  Let $\s$ and $\tau$ be trivial, i.e., $\s(x,y)=\v(x)\v(y)1_A$ and $\tau(x,y)=\v(x)\v(y)1_B$ in Theorem \ref{thm:dcb}.       \end{proof}

 \begin{rmk}\label{rmk:dcb-2}
 (1) By Eq.$(D21)$, Eq.(\ref{eq:DB}) in the Majid's double biproduct is not only sufficient but also necessary. \newline
 \indent{\phantom{\bf Remarks}}\quad (2) The first half parts in Eqs.$(C1)$-$(C4)$ and Eqs.$(F1)$, $(D8)$,$(F3)$, $(D16)$ and the assumption of left module algebra and left comodule coalgebra are equivalent to $A$ is a bialgebra in ${}^H_H{\mathbb{YD}}$. Similarly, the second half parts in Eqs.$(C1)$-$(C4)$ and Eqs.$(F2)$, $(D10)$, $(D17)$, $(F4)$ and the assumption of right module algebra and right comodule coalgebra are equivalent to $B$ is a bialgebra in ${\mathbb{YD}}{}^H_H$.
 \end{rmk}

 \begin{cor}(\cite{WJZ})\label{cor:dcb-3}
 Let $H$ be a bialgebra. Let $\s: H\o H\ra A$ be a linear map, where $A$ is a left $H$-comodule coalgebra, and an algebra with a left $H$-weak action. The left crossed product $A\#^{\s} H$ equipped with the left smash coproduct $A\times H$ becomes a bialgebra if and only if the first half parts in Eqs.$(C1)$-$(C4)$ and Eqs.$(D5)$, $(D7)$, $(D8)$, $(D11)$, $(D12)$, $(D15)$, $(D16)$ hold.
 \end{cor}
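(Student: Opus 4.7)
The plan is to obtain Corollary \ref{cor:dcb-3} as the specialization of Theorem \ref{thm:dcb} to the case $B=K$, the ground field. First I would equip $K$ with its canonical trivial structure as a right $H$-comodule coalgebra and algebra: $b_{[0]}\o b_{[1]}=b\o 1_H$, $b\tr x=\v_H(x)b$, and $\tau(x,x')=\v_H(x)\v_H(x')1_B$. Under these choices the two-sided crossed product $A\#^{\s}H\,{}^{\tau}\#B$ collapses to the left crossed product $A\#^{\s}H$, and the two-sided smash coproduct $A\times H\times B$ collapses to the left smash coproduct $A\times H$, so the bialgebra in question is precisely $A\star^{\s}H$.

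Next I would inspect every condition in Theorem \ref{thm:dcb} and check what survives. The $B$-halves of $(C1)$–$(C4)$ hold automatically because the coalgebra, algebra, coaction and counit on $K$ are trivial. Condition $(C6)$ reduces to $1\o x\o 1 = 1\o x\o 1$; similarly $(D9),(D10),(D13),(D14),(D17),(D18)$ collapse to tautologies, since each involves only $B$-side data that has been trivialized, or $\tau$ factored through $\v_H\o\v_H$. For $(D19)$, substituting $\tau(a_{-12},x_2)=\v_H(a_{-12})\v_H(x_2)1_B$ and using the coaction counit axiom immediately equates the two sides; the same argument handles $(D20)$.

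The two genuinely mixed conditions $(D21)$ and $(D22)$ also trivialize. For $(D21)$, the left-hand side becomes $1_H\tl a_0\o b\,\v_H(a_{-1})$, which equals $a\o b$ by the weak-action axiom $1_H\tl a=a$ and the coaction counit property. For $(D22)$, substituting the trivial $\tau$, $\tr$, and right $H$-coaction on both sides collapses all $b,b'$-dependence to the counit and all $\tau$-factors to $1_B$, leaving the identity $b\o a(x_1\tl a')\s(x_2,x'_1) = b\o a(x_1\tl a')\s(x_2,x'_1)$.

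What remains is exactly the first-half parts of $(C1)$–$(C4)$, the condition $(C5)$ (renamed $(D5)$ in Remark \ref{rmk:dcb}), and $(D7),(D8),(D11),(D12),(D15),(D16)$ — precisely the list in the statement of Corollary \ref{cor:dcb-3}. Conversely, starting from those conditions and equipping $K$ with the trivial structure above, Theorem \ref{thm:dcb} yields the bialgebra structure on $A\star^{\s}H$. The main obstacle is simply organizing the bookkeeping of which conditions collapse under the trivialization; the only nontrivial check is $(D21)$, and it follows at once from the weak-action and counit axioms, so no real computation is needed.
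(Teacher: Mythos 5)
Your proposal is correct and is exactly the paper's argument: the paper's entire proof of this corollary is ``Let $B=K$ in Theorem \ref{thm:dcb}'', and you have simply spelled out the routine verification that the $B$-side and mixed conditions trivialize. One tiny imprecision: the collapse of $(D20)$ uses the normalization $\s(x,1_H)=\v_H(x)1_A$ (part of $A\#^{\s}H$ being unital) together with the trivial coaction on $K$, not the triviality of $\tau$ as in $(D19)$, but this does not affect the conclusion.
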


 \begin{proof} Let $B=K$ in Theorem \ref{thm:dcb}. \end{proof}

 \begin{rmk}\label{rmk:dcb-3}
 (1) Note that condition $(D5)$ is exactly the condition of the twisted comodule cocycle in \cite{WJZ}. Although this condition is the only prerequisite for the bialgebra $A\star^\s H$ in \cite{WJZ}, here it is just one of the necessary conditions. \newline
 \indent{\phantom{\bf Remarks}}\quad (2) The conditions in Corollary \ref{cor:dcb-3} are simpler than the ones in \cite[Corollary 3.3]{MJS}, but obvious they are equivalent.
 \end{rmk}

 \begin{cor} \label{cor:dcb-4}
 Let $H$ be a bialgebra. Let $\tau: H\o H\ra B$ be a linear map, where $B$ is a right $H$-comodule coalgebra, and an algebra with a right $H$-weak action. The right crossed product $H~^{\tau}\# B$ equipped with the right smash coproduct $H\times B$ becomes a bialgebra if and only if the second half parts in Eqs.$(C1)$-$(C4)$ and Eqs.$(D6)$, $(D9)$, $(D10)$, $(D13)$, $(D14)$, $(D17)$, $(D18)$ hold. In this case, we denote this bialgebra by $H~^\tau\star B$.
 \end{cor}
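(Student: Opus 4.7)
The plan is to derive this corollary as the $A = K$ specialization of Theorem \ref{thm:dcb}, exactly mirroring how Corollary \ref{cor:dcb-3} was obtained by setting $B = K$. I take $A = K$ equipped with the trivial left $H$-comodule coalgebra structure, namely coaction $k \mapsto 1_H \otimes k$, trivial left $H$-weak action $x \triangleright k = \varepsilon_H(x)k$, and trivial cocycle $\sigma(x,x') = \varepsilon_H(x)\varepsilon_H(x')1_A$. Under this specialization, the two-sided crossed product multiplication collapses to
$$
(x \otimes b)(x' \otimes b') = x_1 x'_1 \otimes \tau(x_2,x'_2)(b\triangleleft x_3)b',
$$
which is the right crossed product $H\,{^\tau}\# B$, and the two-sided smash coproduct of Definition \ref{de:1.9} reduces to the right smash coproduct $H \times B$.

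Next I will walk through the conditions of Theorem \ref{thm:dcb} one by one and observe how they sort into three groups. The ``$A$-side'' conditions, namely the first halves of $(C1)$--$(C4)$ together with $(D5)$, $(D7)$, $(D8)$, $(D11)$, $(D12)$, $(D15)$, $(D16)$, become vacuous identities in $K$, since every occurrence of $\sigma$, $\triangleright$ and the left coaction reduces via the counit. The ``$B$-side'' conditions, namely the second halves of $(C1)$--$(C4)$ together with $(D6)$, $(D9)$, $(D10)$, $(D13)$, $(D14)$, $(D17)$, $(D18)$, are unaffected by the specialization and are exactly those listed in the corollary.

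The only nontrivial check is that the four mixing conditions $(D19)$--$(D22)$ also collapse to tautologies. Using $a_{-1}\otimes a_0 = 1_H \otimes a$ and $x \triangleright k = \varepsilon_H(x)k$, $(D19)$ and $(D20)$ become the identity $x \otimes 1_B \otimes k = x_1 \otimes \varepsilon_H(x_2)1_B \otimes k$ and its symmetric partner, while $(D21)$ reduces to $\varepsilon_H(b_{[1]})a \otimes b_{[0]} = a \otimes b$, which is the counit axiom for the right $H$-comodule $B$. The potential obstacle is $(D22)$, the most intricate of the four; but with trivial $\sigma$, trivial $\triangleright$, and $a_{-1}\otimes a_0 = 1_H \otimes a$, both sides reduce to $\tau(x_2,x'_2)(b\triangleleft x_3)b' \otimes aa'\varepsilon_H(x_1)\varepsilon_H(x'_1)$, so the equation holds automatically. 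Assembling these reductions, Theorem \ref{thm:dcb} yields precisely the stated list of necessary and sufficient conditions for $H\,{^\tau}\star B$ to be a bialgebra.
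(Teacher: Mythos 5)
Your proposal is correct and takes essentially the same route as the paper, which proves the corollary simply by setting $A=K$ in Theorem \ref{thm:dcb}; your additional verification that the $A$-side and mixing conditions $(D19)$--$(D22)$ collapse to tautologies (with $(D21)$ becoming the comodule counit axiom) is exactly the bookkeeping the paper leaves implicit.
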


 \begin{proof} Let $A=K$ in Theorem \ref{thm:dcb}. \end{proof}

 \begin{rmk}\label{rmk:dcb-4}
 The conditions in Corollary \ref{cor:dcb-4} are simpler than the ones in \cite[Proposition 2.2]{MLL}, but obvious they are equivalent.
 \end{rmk}

\section{Conclusion} We end this paper by three questions:
 \begin{question} \label{q:1} Majid realized a categorical interpretation of Radford's biproduct (\cite{Maj1,Maj2}): $A\star H$ is a Radford's biproduct if and only if $A$ is a bialgebra in the Yetter-Drinfel'd category ${}^H_H{\mathbb{YD}}$. This ensure that $A\star H$ can play a central role in the classification of finite-dimensional pointed Hopf algebras. By Theorem \ref{thm:dcb} and Remark \ref{rmk:dcb}, we know that the first half parts in Eqs.$(C1)$-$(C4)$ and Eqs.$(D5)$, $(D7)$, $(D8)$, $(D11)$, $(D12)$, $(D15)$, $(D16)$ are the necessary and sufficient conditions for $A\star^\s H$ to be bialgebra. These conditions correspond to the conditions for Radford's biproduct one by one given in \cite{Ra}. Here we note that Eq.$(D16)$ is compatible condition for Yetter-Drinfeld module. Then whether there exists an appropriate category ${}^H_H{\mathbb{YD}}^\s$ such that the conditions above for $A\star^\s H$ hold if and only if $A$ is a bialgebra in the category ${}^H_H{\mathbb{YD}}^\s$.
 \end{question}

 \begin{question} \label{q:3} In Theorem \ref{thm:dcb}, we give the necessary and sufficient conditions for the two-sided crossed product $A\#^{\s} H~^{\tau}\# B$ equipped with the two-sided smash coproduct $A\times H\times B$ becomes a bialgebra. In Theorem \ref{thm:2.3}, a more general two-sided crossed product algebra $A\#{_{R}^{G} H^{\tau}_{T}}\# B$ is constructed. Replaced $A\#^{\s} H~^{\tau}\# B$ by $A\#{_{R}^{G} H^{\tau}_{T}}\# B$, what are the new conditions for the new bialgebra?
 \end{question}

 \begin{question} \label{q:2} In Theorem \ref{thm:dcb}, we only obtain the bialgebra structure $A\star^{\s} H~{^{\tau}\star} B$. It is natural to ask when $A\star^{\s} H~{^{\tau}\star} B$ is a Hopf algebra, i.e., how to construct the antipode for $A\star^{\s} H~{^{\tau}\star} B$. Although we provide the antipode in Proposition \ref{pro:3.3} for $A\star {_{R}^{F} H^{\tau}_{T}}\star B$, unfortunately, the coalgebra structure of $A\star {_{R}^{F} H^{\tau}_{T}}\star B$ is just the two-sided tensor product coalgebra. Recently, the authors in \cite{SW} give the antipode for a class of Majid's double biproduct by (co)triangular Hopf quasigroups.
 \end{question}

 {\bf Acknowledgments.} The authors are deeply indebted to the referee for his/her very useful suggestions and some improvements to the original manuscript. This work was partially supported by Natural Science Foundation of Henan Province (No. 212300410365) and Research and innovation funding project for Postgraduates of Henan Normal University (No. YL202019).

 \end{document}